\newcommand{\bb}[1]{\mathbb{#1}}
\newcommand{\ol}{\overline}
\newcommand{\bs}{\backslash}
\newcommand{\pd}{\partial}
\newcommand{\Om}{\Omega}
\newcommand{\om}{\omega}
\newcommand{\W}{{\cal W}}
\newcommand{\PI}{{\cal{PI}}}
\newcommand{\supp}{\operatorname{supp}}
\newcommand{\CAP}{\operatorname{cap}}
\renewcommand{\Re}{\operatorname{Re}}
\newcommand{\sgn}{\operatorname{sgn}}
\newcommand{\eps}{\varepsilon}
\newcommand{\norm}[1]{\|#1\|}
\begin{document}


\title*{Weighted Chebyshev Polynomials on Compact Subsets of the Complex Plane}
\author{Galen Novello, Klaus Schiefermayr, and Maxim Zinchenko}
\institute{Galen Novello \at Department of Mathematics and Statistics, University of New Mexico, Albuquerque, NM 87131, USA; \email{gnovello@unm.edu}
\and Klaus Schiefermayr \at University of Applied Sciences Upper Austria, Campus Wels, Austria; \email{klaus.schiefermayr@fh-wels.at}
\and Maxim Zinchenko \at Department of Mathematics and Statistics, University of New Mexico, Albuquerque, NM 87131, USA; \email{maxim@math.unm.edu};
Research supported in part by Simons Foundation grant CGM--581256.}
\maketitle

\abstract*{}

\abstract{
We study weighted Chebyshev polynomials on compact subsets of the complex plane with respect to a bounded weight function. We establish existence and uniqueness of weighted Chebyshev polynomials and derive weighted analogs of Kolmogorov's criterion, the alternation theorem, and a characterization due to Rivlin and Shapiro. We derive invariance of the Widom factors of weighted Chebyshev polynomials under polynomial pre-images and a comparison result for the norms of Chebyshev polynomials corresponding to different weights. Finally, we obtain a lower bound for the Widom factors in terms of the Szeg\H{o} integral of the weight function and discuss its sharpness.
}

\bigskip

\noindent\emph{Mathematics Subject Classification (2020): 41A50, 30C10, 30E10}

\noindent\emph{Keywords: Weighted Chebyshev polynomials, Bernstein--Walsh inequality, Szeg\H{o} lower bound}

\section{Introduction}


Let $K\subset\bb C$ be a compact set in the complex plane. In this paper, we consider \emph{weighted Chebyshev polynomials} with respect to a weight function $w:K\to[0,\infty)$. For this purpose, we denote by $T_{n,w}^{(K)}(z)$ the weighted Chebyshev polynomial of degree~$n$ on $K$ with respect to~$w$, that is, the minimizer of $\norm{P_n}_{K,w}:=\norm{wP_n}_{K}$ among all monic polynomials~$P_n$ of degree~$n$, where $\norm{\cdot}_{K}$ is the supremum norm on $K$. Moreover, we are interested in the norm of $T_{n,w}^{(K)}$, denoted by
\begin{equation}
t_n(K,w):=\norm{T_{n,w}^{(K)}}_{K,w}.
\end{equation}
To avoid the trivial case, we assume that $w$ is nonzero on at least $n+1$ points of $K$ in which case $\|\cdot\|_{K,w}$ defines a norm on the space of degree $n$ polynomials and hence $t_n(K,w)>0$.
When $K$ has positive logarithmic capacity $\CAP(K)$ we are also interested in the so-called \emph{Widom factors}, defined by
\begin{equation}\label{Wfactors}
\W_n(K,w):=\frac{t_n(K,w)}{\CAP(K)^n}.
\end{equation}


Our contribution in this paper is twofold. First, in Section~\ref{S2}, we discuss existence, uniqueness, and characterization of weighted Chebyshev polynomials for rather general weight functions $w$ on a compact set $K\subset\bb C$. In particular, we derive weighted analogs of Kolmogorov's criterion, the Rivlin--Shapiro characterization, and the alternation theorem in the case of $K\subset\bb R$. Our discussion parallels the classical case of the constant weight $w_0=1$. While the topic of weighted Chebyshev polynomials is certainly well known to experts we were not able to find a suitable reference that addresses the basic questions in full generality. As an application of Kolmogorov's criterion we also derive an invariance result for the weighted Widom factors under polynomial pre-images.

Our second contribution consists of several inequalities for norms and Widom factors of weighted Chebyshev polynomials. In Section~\ref{S3} we derive a comparison result for Chebyshev norms corresponding to two different weights. Applied to the case of a constant weight the comparison result provides a way of estimating weighted Chebyshev norms in terms of the unweighted ones. Then we discuss a class of Szeg\H{o} weights and derive a weighted analog of the Bernstein--Walsh inequality. We finish the paper with a detailed discussion of a Szeg\H{o}-type lower bound for the weighted Widom factors,
\begin{align}
\W_n(K,w) \ge \exp\left[\int\log w(\zeta)\,d\rho_K(\zeta)\right],
\end{align}
where $\rho_K$ denotes the equilibrium measure of $K$. We derive a characterization for equality in the above lower bound for a given $n$ and as a consequence show that the inequality is always strict when $K\subset\bb R$. In the real setting we also show that unlike the unweighted case the above weighted lower bound is sharp in the class of polynomial weights $w$.

\section{Existence, Uniqueness, and Characterization of Weighted Chebyshev Polynomials}\label{S2}

In this section we discuss existence, uniqueness, and characterization of weighted Chebyshev polynomials. Throughout the section we assume that $K\subset\bb C$ is a compact set and $w:K\to[0,\infty)$ is a bounded weight function nonzero on at least $n+1$ points of $K$. We note that $w$ is allowed to have discontinuities and zeros on $K$. The above assumptions on $w$ are the minimal ones that guarantee that $\|\cdot\|_{K,w}$ is a norm on the space of polynomials of degree $n$.
However, for our discussion of weighted Chebyshev polynomials it will be convenient to assume that $w$ is upper semi-continuous (i.e., $w(z)\ge\limsup_{K\ni\zeta\to z}w(\zeta)$ for each accumulation point $z$ of $K$). The following lemma shows that this can be done without loss of generality by replacing $w$ with $\hat w:K\to[0,\infty)$ defined by
\begin{align}\label{w-usc}
\hat w(z)&=\lim_{r\to0^+}\sup_{\zeta\in D(z,r)\cap K}w(\zeta), \quad z\in K,
\end{align}
where $D(z,r)$ denotes the open disk of radius $r$ centered at $z$. 

\begin{lemma}\label{USC-Lem}
The weight function $\hat w$ is upper semi-continuous on $K$, $\hat w\ge w$ on $K$, and $\norm{wf}_{K}=\norm{\hat wf}_{K}$ for any continuous function $f$ on $K$.
\end{lemma}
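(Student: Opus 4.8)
The plan is to verify the three assertions of Lemma~\ref{USC-Lem} in order, with the norm identity being the only part requiring genuine care.

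First, the inequality $\hat w\ge w$ on $K$ is immediate: for any $z\in K$ and any $r>0$ we have $z\in D(z,r)\cap K$, so $\sup_{\zeta\in D(z,r)\cap K}w(\zeta)\ge w(z)$, and letting $r\to0^+$ preserves this. Note also that the supremum in \eqref{w-usc} is nonincreasing in $r$, so the limit exists in $[0,\infty)$ and equals $\inf_{r>0}\sup_{\zeta\in D(z,r)\cap K}w(\zeta)$; boundedness of $w$ keeps $\hat w$ finite.

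Next, for upper semi-continuity of $\hat w$: fix an accumulation point $z$ of $K$ and $\eps>0$. Choose $r>0$ with $\sup_{\zeta\in D(z,r)\cap K}w(\zeta)<\hat w(z)+\eps$. For any $z'\in D(z,r/2)\cap K$ we have $D(z',r/2)\subset D(z,r)$, hence $\sup_{\zeta\in D(z',r/2)\cap K}w(\zeta)\le\sup_{\zeta\in D(z,r)\cap K}w(\zeta)<\hat w(z)+\eps$, and therefore $\hat w(z')\le\sup_{\zeta\in D(z',r/2)\cap K}w(\zeta)<\hat w(z)+\eps$. Taking $\limsup$ over $K\ni z'\to z$ gives $\limsup_{z'\to z}\hat w(z')\le\hat w(z)+\eps$, and since $\eps>0$ was arbitrary, $\hat w$ is upper semi-continuous.

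Finally, the norm identity. Since $\hat w\ge w\ge0$ and $|f|$ is continuous, $|w(z)f(z)|\le|\hat w(z)f(z)|$ for all $z\in K$, giving $\norm{wf}_{K}\le\norm{\hat wf}_{K}$ at once. For the reverse inequality, fix $z\in K$; I will show $|\hat w(z)f(z)|\le\norm{wf}_{K}$. If $z$ is isolated in $K$ then $\hat w(z)=w(z)$ and there is nothing to prove, so assume $z$ is an accumulation point. By definition of $\hat w(z)$ there is a sequence $\zeta_k\in K$ with $\zeta_k\to z$ and $w(\zeta_k)\to\hat w(z)$; using continuity of $f$ we get $|\hat w(z)f(z)|=\lim_k|w(\zeta_k)f(\zeta_k)|\le\norm{wf}_{K}$. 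Taking the supremum over $z\in K$ yields $\norm{\hat wf}_{K}\le\norm{wf}_{K}$, completing the proof. The main (mild) obstacle is this last step: one must produce the approximating sequence realizing the outer limit in \eqref{w-usc} and then exploit the continuity of $f$ to transfer the bound from $\zeta_k$ back to $z$; the compactness of $K$ is not actually needed here beyond ensuring the suprema are over nonempty sets, though it is of course used implicitly to make $\norm{\cdot}_K$ attained elsewhere in the paper.
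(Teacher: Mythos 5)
Your proof is correct and follows essentially the same route as the paper's: monotonicity of the supremum in $r$ gives $\hat w\ge w$ and the existence of the limit, the nested-disk argument gives upper semi-continuity, and a sequence $\zeta_k\to z$ with $w(\zeta_k)\to\hat w(z)$ combined with continuity of $f$ transfers the bound. The only cosmetic difference is in the norm identity: the paper uses upper semi-continuity plus compactness to pick a maximizer $z_0$ of $\hat w|f|$ and runs the sequence argument only at that point, whereas you prove the pointwise bound $\hat w(z)|f(z)|\le\norm{wf}_{K}$ at every $z\in K$ and then take suprema, which (as you note) avoids needing the supremum to be attained.
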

\begin{proof}
Let $z_0\in K$ and $\eps>0$. Then, by \eqref{w-usc}, there exists $r>0$ such that
$w(z)\le \hat w(z_0)+\eps$ for all $z\in D(z_0,r)\cap K$. This together with \eqref{w-usc} implies that $\hat w(z)\le \hat w(z_0)+\eps$ for all $z\in D(z_0,r)\cap K$. Thus, $\hat w$ is upper semi-continuous on $K$. We also get the inequality $w(z_0)\le\hat w(z_0)$.

Let $f$ be a continuous function on $K$. Since $\hat w$ is upper semi-continuous and $K$ is compact, there exists $z_0\in K$ such that $\hat w(z_0)|f(z_0)|=\norm{\hat w f}_{K}$. Using \eqref{w-usc}, pick a sequence $\{z_n\}\subset K$ such that $z_n\to z_0$ and $w(z_n)\to \hat w(z_0)$. Then $\norm{wf}_{K}\ge \hat w(z_0)|f(z_0)|=\norm{\hat w f}_{K}$. The opposite inequality $\norm{wf}_{K}\le\norm{\hat wf}_{K}$ follows from $w\le\hat w$.
\end{proof}

Let $\bb P_n$ denote the space of polynomials of degree at most $n$. By assumption $w(z)\neq0$ for at least $n+1$ points $z\in K$, hence $\norm{\cdot}_{K,w}$ is a norm on $\bb P_n$. Since every finite dimensional normed linear space over $\bb C$ is complete, there exists $p_{n-1}\in\bb P_{n-1}$ that minimizes $\norm{z^n-p_{n-1}}_{K,w}$. Thus, there always exists a weighted Chebyshev polynomial on $K$ given by $T_{n,w}^{(K)}=z^n-p_{n-1}$.

Next, we show that Kolmogorov's criterion holds for weighted Chebyshev polynomials. We call a point $z\in K$ a \emph{$w$-extremal point} of a polynomial $P$ on $K$ if $w(z)|P(z)|=\norm{wP}_{K}=\norm{P}_{K,w}$. In general a polynomial $P$ might have no $w$-extremal points on $K$ due to discontinuities of $w$, however the next lemma shows that such pathological cases do not occur when $w$ is upper semi-continuous.

\begin{lemma}\label{ExtrPts-Lem}
If $w$ is an upper semi-continuous weight on $K$, then for any polynomial $P$ the set of all its $w$-extremal points $K_0$ is a nonempty compact set. If $\norm{P}_{K,w}>0$ the restriction of $w$ to $K_0$ is a positive continuous function.
\end{lemma}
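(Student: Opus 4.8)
The plan is to reduce everything to standard facts about upper semi-continuous functions on compact sets. Write $M:=\norm{P}_{K,w}$ and $g(z):=w(z)|P(z)|$, so that by definition $K_0=\{z\in K:g(z)=M\}$.

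The first step is to observe that $g$ is upper semi-continuous on $K$. The factor $|P|$ is continuous and nonnegative, while $w$ is nonnegative, bounded, and upper semi-continuous; I would check that the product of such functions is again upper semi-continuous. At an accumulation point $z_0$ with $P(z_0)\neq0$ this follows at once from $\limsup_{K\ni\zeta\to z_0}g(\zeta)=|P(z_0)|\limsup_{K\ni\zeta\to z_0}w(\zeta)\le|P(z_0)|\,w(z_0)$, using $w\ge0$ and the upper semi-continuity of $w$; at a zero $z_0$ of $P$ one instead bounds $0\le g(\zeta)\le\norm{w}_K|P(\zeta)|\to0=g(z_0)$ and uses continuity of $P$. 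This last case is the only point where the standing boundedness hypothesis on $w$ is really needed, and it is the only mildly delicate step in the proof.

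With upper semi-continuity of $g$ in hand, the rest is routine. An upper semi-continuous function on a compact set attains its supremum, so there is a point of $K$ at which $g$ equals $M$, i.e., $K_0\neq\emptyset$. Since $g\le M$ everywhere, $K_0$ coincides with the superlevel set $\{z\in K:g(z)\ge M\}$, which is closed by upper semi-continuity of $g$ and hence compact as a closed subset of $K$. Finally, if $M>0$ then for every $z\in K_0$ the identity $w(z)|P(z)|=M>0$ forces $w(z)>0$ and $P(z)\neq0$, so $|P|$ is continuous and nonvanishing on $K_0$ and $w=M/|P|$ there; thus $w|_{K_0}$ is positive and continuous, completing the argument.
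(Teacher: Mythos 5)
Your proof is correct and follows essentially the same route as the paper: show that $w|P|$ is upper semi-continuous, use compactness of $K$ to get a maximizer (so $K_0\neq\emptyset$), identify $K_0$ as a closed superlevel set, and conclude continuity of $w|_{K_0}$ from $w=\norm{P}_{K,w}/|P|$ there. The only difference is that you spell out the upper semi-continuity of the product $w|P|$ (including at zeros of $P$), a step the paper simply asserts.
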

\begin{proof}
Since $w$ is upper semi-continuous, so is $w|P|$ and hence $w|P|$ attains a maximum on the compact set $K$. Thus, the set $K_0\subset K$ of $w$-extremal points is nonempty. Since $w|P|\le\norm{wP}_{K}$ with equality attained on $K_0$, it follows from the upper semi-continuity of $w|P|$ that any limit point of $K_0$ is a $w$-extremal point so $K_0$ is closed and hence compact. If $\norm{wP}_{K}>0$ we have $|P|>0$ on $K_0$ hence $1/|P|$ is continuous on $K_0$ and so the restriction $w|_{K_0}=\norm{wP}_{K}/|P|$ is a positive continuous function.
\end{proof}


\begin{theorem}\label{Kolm-Thm}
Suppose $w$ is an upper semi-continuous weight on $K$ and $P_n$ is a monic polynomial of degree $n$. Then $P_n$ is a weighted Chebyshev polynomial on $K$ if and only if for each $q\in\bb P_{n-1}$,
\begin{align}\label{Kolm-Crit}
\max_{z_0\in K_0}\Re\bigr[P_n(z_0)\ol{q(z_0)}\,\bigl]\ge0,
\end{align}
where $K_0$ is the compact set of $w$-extremal points of $P_n$ on $K$.
\end{theorem}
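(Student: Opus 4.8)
The plan is to prove the two implications separately, using throughout that by Lemma~\ref{ExtrPts-Lem} the set $K_0$ of $w$-extremal points of $P_n$ is nonempty and compact and $w|_{K_0}$ is positive and continuous; here $\norm{P_n}_{K,w}>0$ because $\norm{\cdot}_{K,w}$ is a norm on $\bb P_n$ and $P_n\neq0$, so the hypothesis of Lemma~\ref{ExtrPts-Lem} is met. Write $t_n:=\norm{P_n}_{K,w}$.

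The ``if'' direction is quick. Given any monic polynomial $R_n$ of degree $n$, set $q:=R_n-P_n\in\bb P_{n-1}$ and apply \eqref{Kolm-Crit} to this $q$ to get a point $z_0\in K_0$ with $\Re[P_n(z_0)\ol{q(z_0)}]\ge0$. Then
\[
|R_n(z_0)|^2=|P_n(z_0)|^2+2\Re[P_n(z_0)\ol{q(z_0)}]+|q(z_0)|^2\ge|P_n(z_0)|^2 ,
\]
so multiplying by $w(z_0)^2$ gives $\norm{R_n}_{K,w}\ge w(z_0)|R_n(z_0)|\ge w(z_0)|P_n(z_0)|=t_n$. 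As $R_n$ is arbitrary, $P_n$ minimizes, i.e.\ $P_n$ is a weighted Chebyshev polynomial on $K$.

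For the ``only if'' direction, assume $P_n$ is a weighted Chebyshev polynomial and fix $q\in\bb P_{n-1}$. For each $t>0$ the polynomial $P_n+tq$ is monic of degree $n$, so $\tau(t):=\norm{P_n+tq}_{K,w}\ge t_n$, while the triangle inequality gives $\tau(t)\le t_n+t\norm{q}_{K,w}$; hence $\tau(t)\to t_n$ as $t\to0^+$. Using Lemma~\ref{ExtrPts-Lem}, pick a $w$-extremal point $z_t$ of $P_n+tq$, and along a sequence $t_k\downarrow0$ pass to a subsequence with $z_{t_k}\to z_*\in K$. From
\[
w(z_{t_k})|P_n(z_{t_k})|\ge\tau(t_k)-t_k\norm{q}_{K,w}\longrightarrow t_n
\]
and the upper semi-continuity of $w|P_n|$ one concludes $w(z_*)|P_n(z_*)|\ge t_n$, hence $w(z_*)|P_n(z_*)|=t_n$ and $z_*\in K_0$; in particular $|P_n(z_*)|>0$ and $w(z_*)>0$, and since $|P_n(z_{t_k})|\to|P_n(z_*)|$ the squeeze $w(z_{t_k})=\tfrac{w(z_{t_k})|P_n(z_{t_k})|}{|P_n(z_{t_k})|}$ forces $w(z_{t_k})\to w(z_*)$. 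Finally, expanding $\tau(t_k)^2=w(z_{t_k})^2|P_n(z_{t_k})+t_kq(z_{t_k})|^2$ and using $\tau(t_k)^2\ge t_n^2$ together with $w(z_{t_k})|P_n(z_{t_k})|\le t_n$ and $w(z_{t_k})|q(z_{t_k})|\le\norm{q}_{K,w}$ leads to
\[
w(z_{t_k})^2\,\Re\bigl[P_n(z_{t_k})\ol{q(z_{t_k})}\,\bigr]\ge-\tfrac{t_k}{2}\norm{q}_{K,w}^2 .
\]
Letting $k\to\infty$ and dividing by $w(z_*)^2>0$ gives $\Re[P_n(z_*)\ol{q(z_*)}]\ge0$, so $\max_{z_0\in K_0}\Re[P_n(z_0)\ol{q(z_0)}]\ge0$, which is \eqref{Kolm-Crit}.

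The delicate point is that $w$ need only be upper semi-continuous and may vanish or jump away from $K_0$; the argument is arranged so that the limit is taken at a point $z_*$ forced to lie in $K_0$, where Lemma~\ref{ExtrPts-Lem} makes $w$ positive and continuous, and the only input used about $w$ off $K_0$ is upper semi-continuity of $w|P_n|$ with maximum $t_n$. Establishing $w(z_{t_k})\to w(z_*)$ by the above squeeze is the step that needs this care; the rest is bookkeeping. As an alternative for the ``only if'' direction one can argue by contraposition: if \eqref{Kolm-Crit} fails for some $q$ then $\Re[P_n\ol q]\le-\delta<0$ on the compact set $K_0$ and, by continuity, on an open neighborhood $U\supset K_0$, whereupon a direct estimate of $w^2|P_n+tq|^2$ (quadratic in $t$ on $U\cap K$, and bounded away from $t_n^2$ on the compact complement $K\bs U$ since $\sup_{K\bs U}w|P_n|<t_n$) shows $\norm{P_n+tq}_{K,w}<t_n$ for all small $t>0$, contradicting the minimality of $P_n$.
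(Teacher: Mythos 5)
Your proof is correct, and while the ``if'' direction coincides with the paper's (the same expansion of $|P_n+q|^2$ at a maximizing $w$-extremal point), your ``only if'' direction takes a genuinely different route. The paper argues by contraposition: from a $q$ violating \eqref{Kolm-Crit} it builds an explicit better competitor $Q_\eps=P_n+\eps q$, using continuity and compactness to get a neighborhood $U\supset K_0$ on which $\Re[P_n\ol q]\le -M$, and then splits $K$ into $\ol U\cap K$ (where $|Q_\eps|<|P_n|$) and $K\bs U$ (where $\sup w|P_n|<\norm{P_n}_{K,w}$ by upper semi-continuity), concluding $\norm{Q_\eps}_{K,w}<\norm{P_n}_{K,w}$ for small $\eps$. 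You instead run a direct first-order variational argument: perturb to $P_n+t q$, take $w$-extremal points $z_t$ (which exist by Lemma~\ref{ExtrPts-Lem} since $w$ is upper semi-continuous), let $t\downarrow0$ along a subsequence, and use $\tau(t)\to t_n$ plus upper semi-continuity of $w|P_n|$ to force the limit point $z_*$ into $K_0$; the squeeze $w(z_{t_k})\to w(z_*)$ is legitimate because $|P_n(z_*)|>0$, and the quadratic expansion of $\tau(t_k)^2$ then yields $\Re[P_n(z_*)\ol{q(z_*)}]\ge0$. Each step checks out (including the case of $z_*$ isolated in $K$, where the sequence is eventually constant). What the two approaches buy: the paper's contrapositive construction produces a quantitatively better polynomial and keeps the continuity/compactness work confined to one neighborhood argument; your direct argument avoids constructing $U$ and is the more standard optimality-condition style, at the price of the sequential compactness bookkeeping and the care needed to get continuity of $w$ along the sequence at $z_*$ -- which you correctly identified and handled as the delicate point. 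Your closing sketch of the contrapositive alternative is essentially the paper's proof.
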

\begin{remark}
In \eqref{Kolm-Crit} one can replace $P_n(z_0)$ with $\sgn(P_n(z_0))$, where the sign function is defined by $\sgn(z)=z/|z|$.
\end{remark}
\begin{proof}
Suppose \eqref{Kolm-Crit} holds and let $z_0\in K_0$ denote a point for which the maximum is attained. Then for any monic polynomial $Q$ of degree $n$ we have $Q=P_n+q$ for some $q\in\bb P_{n-1}$ and so, by \eqref{Kolm-Crit},
\begin{align}
|Q(z_0)|^2=|P_n(z_0)|^2+|q(z_0)|^2+2\Re\bigl[P_n(z_0)\ol{q(z_0)}\,\bigr]\ge|P_n(z_0)|^2.
\end{align}
It follows that
\begin{align}
\norm{Q}_{K,w}\ge|w(z_0)Q(z_0)|\ge|w(z_0)P_n(z_0)|=\norm{P_n}_{K,w},
\end{align}
so $P_n$ is a weighted Chebyshev polynomial on $K$.

Conversely, suppose \eqref{Kolm-Crit} does not hold, that is, there exists $q\in\bb P_{n-1}$ such that $\Re[P_n(z_0)\ol{q(z_0)}]<0$ for all $z_0\in K_0$. Then since $K_0$ is compact, there exists an open set $U\supset K_0$ and $M>0$ such that $\Re[P_n(z)\ol{q(z)}]\le-M$ for all $z\in\ol{U}$. Hence for all sufficiently small $\eps>0$ the $n$-th degree monic polynomial $Q_\eps=P_n+\eps q$ satisfies
\begin{align}\label{QleP}
|Q_\eps(z)|^2&=|P_n(z)|^2+\eps^2|q(z)|^2+2\eps\Re\bigl[P_n(z)\ol{q(z)}\,\bigr]\nonumber\\
&\le|P_n(z)|^2+\eps^2\norm{q}_K^2-2\eps M<|P_n(z)|^2, \quad z\in\ol{U}\cap K.
\end{align}
Since $w$ is upper semi-continuous, so is $w|Q_\eps|$ and hence $w|Q_\eps|$ attains a maximum on the compact set $\ol{U}\cap K$, that is, $\norm{Q_\eps}_{\ol{U}\cap K,w}=w(z_1)|Q_\eps(z_1)|$ for some $z_1\in\ol{U}\cap K$. Then, by \eqref{QleP}, $\norm{Q_\eps}_{\ol{U}\cap K,w}<w(z_1)|P_n(z_1)|\le\norm{P_n}_{K,w}$ if $w(z_1)\neq0$ and otherwise $\norm{Q_\eps}_{\ol{U}\cap K,w}=0<\norm{P_n}_{K,w}$. In either case, $\norm{Q_\eps}_{\ol{U}\cap K,w}<\norm{P_n}_{K,w}$.

Now we estimate $\norm{Q_\eps}_{K\bs U,w}$. Since $K\bs U$ is compact and $w|P_n|$ is upper semi-continuous, there exists $z_2\in K\bs U$ such that $\norm{P_n}_{K\bs U,w}=w(z_2)|P_n(z_2)|$. Since $K_0\subset U$, $z_2\notin K_0$ and hence $\norm{P_n}_{K\bs U,w}=w(z_2)|P_n(z_2)|<\norm{P_n}_{K,w}$. Then $\norm{Q_\eps}_{K\bs U,w}\le\norm{P_n}_{K\bs U,w}+\eps\norm{q}_{K\bs U,w}<\norm{P_n}_{K,w}$ for sufficiently small $\eps>0$. Combining with the earlier inequality $\norm{Q_\eps}_{\ol{U}\cap K,w}<\norm{P_n}_{K,w}$ we get $\norm{Q_\eps}_{K,w}<\norm{P_n}_{K,w}$, so $P_n$ is not a weighted Chebyshev polynomial on $K$.
\end{proof}

\begin{corollary}
If $w$ is upper semi-continuous on $K$, then a weighted Chebyshev polynomial of degree $n$ on $K$ has at least $n+1$ $w$-extremal points.
\end{corollary}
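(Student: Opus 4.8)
The plan is to argue by contradiction using the weighted Kolmogorov criterion (Theorem~\ref{Kolm-Thm}). Let $P_n=T_{n,w}^{(K)}$ be a weighted Chebyshev polynomial and let $K_0$ be its compact, nonempty set of $w$-extremal points (Lemma~\ref{ExtrPts-Lem}). Suppose, for contradiction, that $P_n$ has at most $n$ such points, so that $K_0=\{z_1,\dots,z_m\}$ is finite with $1\le m\le n$. Since $w$ is nonzero on at least $n+1$ points of $K$, $\norm{\cdot}_{K,w}$ is a norm on $\bb P_n$, hence $\norm{P_n}_{K,w}=t_n(K,w)>0$; then by Lemma~\ref{ExtrPts-Lem} the restriction $w|_{K_0}$ is positive, and therefore $|P_n(z_j)|=t_n(K,w)/w(z_j)>0$ for each $j=1,\dots,m$.

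Next I would produce a polynomial $q\in\bb P_{n-1}$ violating \eqref{Kolm-Crit}. Because $m\le n$, Lagrange interpolation through the distinct nodes $z_1,\dots,z_m$ yields a polynomial of degree at most $m-1\le n-1$, so one may choose $q\in\bb P_{n-1}$ with $q(z_j)=-P_n(z_j)$ for $j=1,\dots,m$. Then for every $z_0\in K_0$,
\begin{align}
\Re\bigl[P_n(z_0)\ol{q(z_0)}\,\bigr]=-|P_n(z_0)|^2<0,
\end{align}
whence $\max_{z_0\in K_0}\Re[P_n(z_0)\ol{q(z_0)}\,]<0$. This contradicts Theorem~\ref{Kolm-Thm}, which asserts that a weighted Chebyshev polynomial satisfies \eqref{Kolm-Crit} for every $q\in\bb P_{n-1}$. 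Hence $K_0$ must contain at least $n+1$ points.

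There is essentially no hard step here: the argument is a direct corollary of Theorem~\ref{Kolm-Thm} and Lemma~\ref{ExtrPts-Lem}. The only points needing care are the reduction to the case that $K_0$ is finite (if $K_0$ is infinite there is nothing to prove), the verification that $P_n$ does not vanish on $K_0$ (which uses $t_n(K,w)>0$ together with the positivity of $w$ on $K_0$ from Lemma~\ref{ExtrPts-Lem}), and the observation that $m-1\le n-1$ guarantees the interpolating polynomial lies in $\bb P_{n-1}$; the degenerate case $n=0$ is immediate since then $K_0\neq\emptyset$ already gives $|K_0|\ge 1=n+1$.
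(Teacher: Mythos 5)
Your proof is correct and follows essentially the same route as the paper: assume at most $n$ $w$-extremal points, build $q\in\bb P_{n-1}$ by Lagrange interpolation with $q(z_j)=-P_n(z_j)$, note $P_n(z_j)\neq0$ since $\norm{P_n}_{K,w}>0$, and contradict Kolmogorov's criterion \eqref{Kolm-Crit}. The extra remarks you add (positivity of $w$ on $K_0$ via Lemma~\ref{ExtrPts-Lem}, the reduction to finite $K_0$, and the trivial case $n=0$) are fine but not needed beyond what the paper states.
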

\begin{proof}
Suppose by contradiction that a weighted Chebyshev polynomial $T_{n,w}^{(K)}$ has $m\le n$ $w$-extremal points $z_1,\dots,z_m$ on $K$. By Lagrange interpolation there exists $q\in\bb P_{n-1}$ such that $q(z_j)=-T_{n,w}^{(K)}(z_j)$ for all $j=1,\dots,m$. Since $\norm{T_{n,w}^{(K)}}_{K,w}>0$ we have $T_{n,w}^{(K)}(z_j)\neq0$, and hence,
\begin{align}
\Re\bigr[T_{n,w}^{(K)}(z_j)\ol{q(z_j)}\,\bigl]=-|T_{n,w}^{(K)}(z_j)|^2<0, \quad j=1,\dots,m,
\end{align}
which is a contradiction to Kolmogorov's criterion.
\end{proof}

\begin{corollary}\label{Cor-Uniq}
The weighted Chebyshev polynomial $T_{n,w}^{(K)}$ is unique.
\end{corollary}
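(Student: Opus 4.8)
The plan is to combine a midpoint (convexity) argument with the corollary to Theorem~\ref{Kolm-Thm} on the number of $w$-extremal points. First I would reduce to the case of an upper semi-continuous weight: by Lemma~\ref{USC-Lem} one has $\norm{wP}_{K}=\norm{\hat wP}_{K}$ for every polynomial $P$, so $T_{n,w}^{(K)}$ and $T_{n,\hat w}^{(K)}$ have exactly the same set of minimizers, and it suffices to prove uniqueness for the u.s.c.\ weight $\hat w$. Thus I may assume throughout that $w$ is upper semi-continuous on $K$.

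Next, suppose $P_n$ and $Q_n$ are both weighted Chebyshev polynomials of degree $n$ on $K$, and set $t:=t_n(K,w)>0$, so $\norm{P_n}_{K,w}=\norm{Q_n}_{K,w}=t$. Consider the monic polynomial $R_n:=\tfrac12(P_n+Q_n)$ of degree $n$. The triangle inequality for $\norm{\cdot}_{K,w}$ gives $\norm{R_n}_{K,w}\le\tfrac12\bigl(\norm{P_n}_{K,w}+\norm{Q_n}_{K,w}\bigr)=t$, and minimality of $t$ forces equality, so $R_n$ is itself a weighted Chebyshev polynomial of degree $n$ on $K$. By the corollary following Theorem~\ref{Kolm-Thm}, $R_n$ therefore has at least $n+1$ distinct $w$-extremal points $z_1,\dots,z_{n+1}\in K$.

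The core of the argument is a pointwise equality analysis at each $z_j$. Since $w(z_j)|R_n(z_j)|=t>0$, Lemma~\ref{ExtrPts-Lem} gives $w(z_j)>0$, and the $w$-optimality of $P_n$ and $Q_n$ yields $|P_n(z_j)|\le t/w(z_j)=|R_n(z_j)|$ and $|Q_n(z_j)|\le t/w(z_j)=|R_n(z_j)|$. Chaining these with $|R_n(z_j)|=\tfrac12|P_n(z_j)+Q_n(z_j)|\le\tfrac12\bigl(|P_n(z_j)|+|Q_n(z_j)|\bigr)$ shows every inequality in the chain must in fact be an equality; hence $|P_n(z_j)|=|Q_n(z_j)|=t/w(z_j)>0$, and equality in the complex triangle inequality $|P_n(z_j)+Q_n(z_j)|=|P_n(z_j)|+|Q_n(z_j)|$ with nonzero summands of equal modulus forces $P_n(z_j)=Q_n(z_j)$.

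Finally, $P_n-Q_n\in\bb P_{n-1}$ since the leading terms cancel, yet it vanishes at the $n+1$ distinct points $z_1,\dots,z_{n+1}$; therefore $P_n-Q_n\equiv0$, i.e.\ $P_n=Q_n$. I expect the only delicate point to be the equality case of the complex triangle inequality at the extremal points, together with the accompanying positivity of $w$ on the extremal set supplied by Lemma~\ref{ExtrPts-Lem}; the remaining steps are routine.
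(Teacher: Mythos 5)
Your proposal is correct and follows essentially the same route as the paper: reduce to an upper semi-continuous weight via Lemma~\ref{USC-Lem}, take the midpoint of two minimizers, invoke the corollary guaranteeing $n+1$ $w$-extremal points, and use equality in the triangle inequality at those points to conclude the two polynomials agree at $n+1$ points and hence coincide. Your write-up just spells out the equality case of the complex triangle inequality a bit more explicitly than the paper does.
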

\begin{proof}
By Lemma~\ref{USC-Lem} we may assume without loss of generality that $w$ is upper semi-continuous on $K$. Let $P_n$ be a weighted Chebyshev polynomials of degree $n$ on $K$, then $t_n(K,w)=\norm{T_{n,w}^{(K)}}_{K,w}=\norm{P_n}_{K,w}$.

Let $Q=\frac12(P_n+T_{n,w}^{(K)})$, then $Q$ is a monic polynomial of degree $n$ and, by triangle inequality, $\norm{Q}_{K,w}\le t_n(K,w)$. Thus, $Q$ is also a weighted Chebyshev polynomial on $K$ and so, by the previous corollary, $Q$ has at least $n+1$ $w$-extremal points $z_1,\dots,z_{n+1}$. Then, by triangle inequality, we get for each $j=1,\dots,n+1$,
\begin{align}
\frac{t_n(K,w)}{w(z_j)}=|Q(z_j)|&=\frac12\big|P_n(z_j)+T_{n,w}^{(K)}(z_j)\big| \nonumber\\
&\le\frac12\Big(\big|P_n(z_j)\big|+\big|T_{n,w}^{(K)}(z_j)\big|\Big)=\frac{t_n(K,w)}{w(z_j)},
\end{align}
which implies $P_n(z_j)=T_{n,w}^{(K)}(z_j)$, $j=1,\dots,n+1$, and hence, $P_n\equiv T_{n,w}^{(K)}$.
\end{proof}


In the special case $K\subset\bb R$ the weighted Kolmogorov criterion yields a weighted version of the well known Alternation Theorem:


\begin{theorem}
Suppose $K\subset\bb R$ and let $w$ be an upper semi-continuous weight on $K$. Then:
\begin{enumerate}
\item The $w$-Chebyshev polynomial $T_{n,w}^{(K)}$ is real.
\item Let $P_n$ be a real monic polynomial of degree $n$. Then $P_n=T_{n,w}^{(K)}$ if and only if $P_n$ has $n$ sign changes on the set of its $w$-extremal points on $K$, that is, there exist points $x_0<x_1<\dots<x_n$ on $K$ such that
\begin{equation}\label{alt-signs}
w(x_j)P_n(x_j)=(-1)^{n-j}\norm{P_n}_{K,w}, \quad j=0,1,\ldots,n.
\end{equation}
\end{enumerate}
\end{theorem}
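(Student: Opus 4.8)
The plan is to handle the three assertions in turn, relying on uniqueness (Corollary~\ref{Cor-Uniq}) for (i) and on Kolmogorov's criterion (Theorem~\ref{Kolm-Thm}) together with Lemma~\ref{ExtrPts-Lem} for (ii). For (i), if $T:=T_{n,w}^{(K)}$ has coefficients $c_0,\dots,c_{n-1}$ (and $c_n=1$), I would consider $\tilde T(z):=\ol{T(\bar z)}=z^n+\sum_{k=0}^{n-1}\ol{c_k}z^k$, which is again monic of degree $n$; since $K\subset\bb R$, one has $|\tilde T(x)|=|\ol{T(x)}|=|T(x)|$ for every $x\in K$, so $\norm{\tilde T}_{K,w}=\norm{T}_{K,w}=t_n(K,w)$ and $\tilde T$ is also a weighted Chebyshev polynomial. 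By Corollary~\ref{Cor-Uniq}, $\tilde T=T$, hence $c_k=\ol{c_k}$ for all $k$ and $T$ is real.

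For the ``if'' direction of (ii): assume $P_n$ is real monic with points $x_0<\dots<x_n$ in $K$ satisfying \eqref{alt-signs}. First note $\norm{P_n}_{K,w}>0$ (otherwise $P_n$ vanishes at the $\ge n+1$ points where $w\neq0$ and is identically zero, contradicting monicity), so each $x_j$ is a $w$-extremal point of $P_n$ with $w(x_j)>0$ and $\sgn(P_n(x_j))=(-1)^{n-j}$. To verify \eqref{Kolm-Crit}, I would fix $q\in\bb P_{n-1}$, write $q=q_1+iq_2$ with $q_1,q_2$ real, and use that the $x_j$ lie in the extremal set $K_0$ together with $P_n(x_j)\in\bb R$ to get $\max_{z_0\in K_0}\Re[P_n(z_0)\ol{q(z_0)}]\ge\max_{0\le j\le n}P_n(x_j)q_1(x_j)$. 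If this were negative, then $\sgn(q_1(x_j))=-(-1)^{n-j}$ would alternate, forcing the nonzero polynomial $q_1\in\bb P_{n-1}$ to change sign on each of the $n$ intervals $(x_{j-1},x_j)$ and hence to have $n$ real zeros — impossible. Thus \eqref{Kolm-Crit} holds and $P_n=T_{n,w}^{(K)}$ by Theorem~\ref{Kolm-Thm}.

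For the ``only if'' direction, write $T:=T_{n,w}^{(K)}$ (real by (i)), let $K_0$ be its set of $w$-extremal points, and $t_n:=t_n(K,w)>0$. By Lemma~\ref{ExtrPts-Lem}, $K_0$ is nonempty and compact and $|T|=t_n/w>0$ on $K_0$, so $T$ has a definite sign at every point of $K_0$ and its (finite) real zero set is disjoint from $K_0$. The heart of the argument is to show $T$ has at least $n$ sign changes on $K_0$, i.e.\ there exist $x_0<\dots<x_n$ in $K_0$ with $\sgn(T(x_j))$ strictly alternating. I would argue by contradiction: if along every increasing finite sequence in $K_0$ the sign of $T$ changes at most $n-1$ times, then, using that $K_0^{\pm}:=\{x\in K_0:\pm T(x)>0\}$ are disjoint compact sets and that the real zeros of $T$ already split $\bb R$ into finitely many maximal intervals of constant sign (each meeting $K_0$ in a compact set), I would place cut points $g_1<\dots<g_m$ with $m\le n-1$, none in $K_0$, in the gaps between consecutive such intervals that meet $K_0$ and carry opposite signs of $T$; then a suitably signed $q(x)=\pm\prod_{i=1}^m(x-g_i)\in\bb P_{n-1}$ satisfies $\sgn(q)\equiv\sgn(T)$ on $K_0$, so $-q\in\bb P_{n-1}$ gives $\Re[T(z)\ol{(-q)(z)}]=-T(z)q(z)<0$ for every $z\in K_0$, contradicting Kolmogorov's criterion for $T$. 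Finally, given such $x_0<\dots<x_n$, the intermediate value theorem produces a zero of $T$ in each $(x_{j-1},x_j)$; these are $n$ distinct real zeros of the monic degree-$n$ polynomial $T$, hence all of its zeros, so $T>0$ on $(x_n,\infty)$, which forces $\sgn(T(x_n))=+1$ and therefore $\sgn(T(x_j))=(-1)^{n-j}$. Multiplying by $w(x_j)|T(x_j)|=t_n=\norm{T}_{K,w}$ yields \eqref{alt-signs}.

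The step I expect to be the main obstacle is the construction of the polynomial $q$ in the ``only if'' direction: one must convert the combinatorial hypothesis ``$T$ has at most $n-1$ sign changes on $K_0$'' into a genuine geometric separation of the (possibly Cantor-like) set $K_0$ by fewer than $n$ cut points lying outside $K_0$. The delicate point is to verify that the gaps into which the $g_i$ are placed are nonempty nondegenerate intervals avoiding $K_0$; this is exactly where the disjointness and compactness of $K_0^{\pm}$, and the finiteness of $T$'s real zero set together with $T\neq0$ on $K_0$, are used.
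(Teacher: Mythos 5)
Your proposal is correct and follows essentially the same route as the paper: part (i) via uniqueness (you use the competitor $\overline{T(\bar z)}$ where the paper uses $\Re T$, an immaterial difference), the ``if'' direction by feeding the alternation into Kolmogorov's criterion and forcing a degree-$(n-1)$ real polynomial to vanish at $n$ points, and the ``only if'' direction by building a polynomial $q\in\bb P_{n-1}$ whose sign agrees with (resp.\ opposes) that of $T_{n,w}^{(K)}$ on the extremal set, with cut points taken outside $K_0$, contradicting \eqref{Kolm-Crit}. The ``delicate point'' you flag is handled exactly as you indicate (zeros of $T_{n,w}^{(K)}$ separating opposite-sign blocks cannot lie in $K_0$), which mirrors the paper's partition-and-merge construction.
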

\begin{proof}
(i) Since $|\!\Re[T_{n,w}^{(K)}(x)]|\le|T_{n,w}^{(K)}(x)|$ and $\Re[T_{n,w}^{(K)}(x)]$ is a monic polynomial of $x\in\bb R$, it follows from uniqueness of the weighted Chebyshev polynomial, Corollary~\ref{Cor-Uniq}, that $T_{n,w}^{(K)}$ must be real.\\
(ii) Let $P_n$ be a real monic polynomial of degree $n$ such that $P_n$ alternates sign on some $w$-extremal points $x_0<x_1<\dots<x_n$ and suppose by contradiction that $P_n$ is not the $w$-Chebyshev polynomial. Then by Kolmogorov's criterion, Theorem~\ref{Kolm-Thm}, there exists a polynomial $q\in\bb{P}_{n-1}$ such that
\begin{align}
\Re\bigr[P_n(x_j)\ol{q(x_j)}\,\bigl]<0,\quad j=0,\dots,n.
\end{align}
Then, the real polynomial $r(x)=\Re[q(x)]$ of degree at most $n-1$ alternates sign on $x_0<x_1<\dots<x_n$ hence $r(x)$ has at least $n$ zeros and so $r(x)$ must be identically zero, a contradiction.

Now, let $P_n=T_{n,w}^{(K)}$ and suppose by contradiction that $P_n$ has fewer than $n$ sign changes on the set of its $w$-extremal points. Partition the real line by the real zeros of $P_n$ into $m\le n+1$ sets (open intervals). If $P_n$ has the same sign on two consecutive sets, replace the two sets by their union and repeat the process until $P_n$ takes alternating signs on consecutive sets. Then if a set does not contain any $w$-extremal point replace the set and its nearest neighbors by their union. This will result in $m\le n$ sets since by assumption $P_n$ has at most $n-1$ sign changes on its $w$-extremal points. Now let $q$ be the degree $m-1$ polynomial with zeros between consecutive sets and leading coefficient $-1$. Then $q\in\bb{P}_{n-1}$ and by construction $q$ has the opposite sign to $P_n$ at each $w$-extremal point of $P_n$. The existence of such a polynomial $q$ contradicts \eqref{Kolm-Crit}. Thus, $P_n$ must have $n$ real simple zeros which partition $\bb R$ into $n+1$ intervals each of which contains a $w$-extremal point and hence $P_n$ has $n$ sign changes on the set of its $w$-extremal points.
\end{proof}

We also mention a reformulation of Kolmogorov's criterion due to Rivlin and Shapiro.
The following weighted version follows from Theorem~\ref{Kolm-Thm} as in the classical unweighted case \cite[Thm.\,2.4]{DVoLo-1993}.


\begin{theorem}\label{Rivl-Thm}
Suppose $w$ is an upper semi-continuous weight on $K$ and $P_n$ is a monic polynomial of degree $n$. Then $P_n$ is a weighted Chebyshev polynomial on $K$ if and only if there exist $m\le2n+1$ $w$-extremal points $z_1,\dots,z_m$ and positive number $\alpha_1,\dots,\alpha_m$ such that $\sum_{j=1}^m\alpha_j=1$ and for each $q\in\bb P_{n-1}$,
\begin{align}\label{Rivl-Crit1}
\sum_{j=1}^m\alpha_jP_n(z_j)\ol{q(z_j)}=0.
\end{align}
\end{theorem}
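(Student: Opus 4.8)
\emph{Proof plan.} The plan is to derive the statement from Kolmogorov's criterion, Theorem~\ref{Kolm-Thm}, by the standard finite-dimensional convexity argument used in the unweighted case. View $\bb P_{n-1}$ as a real vector space $V$ with $\dim_{\bb R}V=2n$, and for each $w$-extremal point $z_0\in K_0$ define the real-linear functional $\ell_{z_0}\in V^*$ by $\ell_{z_0}(q)=\Re[P_n(z_0)\ol{q(z_0)}\,]$. Since $K_0$ is a nonempty compact set by Lemma~\ref{ExtrPts-Lem} and $z_0\mapsto\ell_{z_0}$ is continuous, the set $S=\{\ell_{z_0}:z_0\in K_0\}\subset V^*$ is compact, and hence so is its convex hull $\operatorname{conv}(S)$ (in finite dimensions the convex hull of a compact set is compact, being the image of the product of a simplex with finitely many copies of $S$ under barycentric combination).

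The key step is the equivalence: the inequality \eqref{Kolm-Crit} holds for every $q\in\bb P_{n-1}$ if and only if $0\in\operatorname{conv}(S)$. Indeed, if $0=\sum_j\alpha_j\ell_{z_j}$ with $\alpha_j>0$ and $\sum_j\alpha_j=1$, then for each $q$ the values $\ell_{z_j}(q)$ have weighted mean zero, so $\max_j\ell_{z_j}(q)\ge0$ and a fortiori $\max_{z_0\in K_0}\Re[P_n(z_0)\ol{q(z_0)}\,]\ge0$. Conversely, if $0\notin\operatorname{conv}(S)$, the strict separating hyperplane theorem (applicable since $\operatorname{conv}(S)$ is a nonempty closed convex subset of the finite-dimensional space $V^*$) yields $q\in V\cong V^{**}$ and $c>0$ with $\ell(q)\ge c$ for every $\ell\in\operatorname{conv}(S)$; then $-q\in\bb P_{n-1}$ satisfies $\Re[P_n(z_0)\ol{(-q)(z_0)}\,]\le-c<0$ for all $z_0\in K_0$, contradicting \eqref{Kolm-Crit}.

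Now assume $P_n$ is a weighted Chebyshev polynomial. By Theorem~\ref{Kolm-Thm} and the equivalence above, $0\in\operatorname{conv}(S)$, so Carath\'eodory's theorem applied in $V^*$ produces $m\le\dim_{\bb R}V^*+1=2n+1$ extremal points $z_1,\dots,z_m\in K_0$ and weights $\alpha_1,\dots,\alpha_m>0$ with $\sum_j\alpha_j=1$ and $\sum_{j=1}^m\alpha_j\Re[P_n(z_j)\ol{q(z_j)}\,]=0$ for all $q\in\bb P_{n-1}$. Replacing $q$ by $iq\in\bb P_{n-1}$ converts real parts into imaginary parts, giving $\sum_{j=1}^m\alpha_j\Im[P_n(z_j)\ol{q(z_j)}\,]=0$ as well, and combining the two yields \eqref{Rivl-Crit1}. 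For the converse, if such $z_j$ and $\alpha_j$ exist then for every $q$ the numbers $\Re[P_n(z_j)\ol{q(z_j)}\,]$ average to zero, so at least one is nonnegative and $\max_{z_0\in K_0}\Re[P_n(z_0)\ol{q(z_0)}\,]\ge0$; thus $P_n$ is a weighted Chebyshev polynomial by Theorem~\ref{Kolm-Thm}.

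The argument is routine once Kolmogorov's criterion is available. The points that need care are the compactness of $S$ (where Lemma~\ref{ExtrPts-Lem} is used), keeping careful track of \emph{real} dimensions so that Carath\'eodory delivers the sharp bound $m\le2n+1$, and the passage from the real identity to the complex orthogonality relation \eqref{Rivl-Crit1} via $q\mapsto iq$ without enlarging the set of nodes.
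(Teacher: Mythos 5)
Your proof is correct, and it is the same route the paper takes: the paper does not write out a proof but states that the theorem follows from Kolmogorov's criterion (Theorem~\ref{Kolm-Thm}) as in the classical unweighted case of DeVore--Lorentz, and your convexity argument (compactness of the functional set over $K_0$, strict separation in the $2n$-dimensional real space $\bb P_{n-1}$, Carath\'eodory's bound $m\le2n+1$, and the $q\mapsto iq$ trick to pass to the complex orthogonality relation) is precisely that standard argument, carried out correctly with the weighted extremal set $K_0$ supplied by Lemma~\ref{ExtrPts-Lem}.
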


\begin{corollary}
Suppose $w$ is an upper semi-continuous weight on $K$ and $P_n$ is a monic polynomial of degree $n$. Then $P_n$ is the weighted Chebyshev polynomial on $K$ if and only if there exist $n+1\le m\le2n+1$ $w$-extremal points $z_1,\dots,z_m$ and positive numbers $\lambda_1,\dots,\lambda_m$ such that
\begin{align}\label{Rivl-Crit2}
\sum_{j=1}^m\lambda_jz_j^k\sgn\ol{P_n(z_j)}=0, \quad k=0,\dots,n-1,
\end{align}
where $\sgn(z)=z/|z|$.
\end{corollary}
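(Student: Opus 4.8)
The plan is to deduce the corollary from the Rivlin--Shapiro characterization of Theorem~\ref{Rivl-Thm} by taking complex conjugates in~\eqref{Rivl-Crit1} and absorbing the moduli $|P_n(z_j)|$ into the coefficients. First I would record a few preliminary observations. A nonzero monic polynomial of degree $n$ cannot vanish at all of the (at least $n+1$) points of $K$ at which $w$ is nonzero, so $\norm{P_n}_{K,w}>0$; hence, by Lemma~\ref{ExtrPts-Lem}, the set $K_0$ of $w$-extremal points of $P_n$ is a nonempty compact set on which $w$ is positive and $|P_n|=\norm{P_n}_{K,w}/w>0$. In particular $\sgn P_n(z)$ and $\sgn\ol{P_n(z)}$ are well defined for every $w$-extremal point $z$, and the elementary identities $\ol{P_n(z)}=|P_n(z)|\,\sgn\ol{P_n(z)}$ and $\ol{\sgn\ol{P_n(z)}}=\sgn P_n(z)$ hold there.

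For the forward implication I would assume $P_n=T_{n,w}^{(K)}$ and invoke Theorem~\ref{Rivl-Thm}, which furnishes $m\le2n+1$ distinct $w$-extremal points $z_1,\dots,z_m$ and positive weights $\alpha_1,\dots,\alpha_m$ with $\sum_{j=1}^m\alpha_jP_n(z_j)\ol{q(z_j)}=0$ for all $q\in\bb P_{n-1}$. Choosing $q(z)=z^k$ for $k=0,\dots,n-1$ and passing to complex conjugates turns this into $\sum_{j=1}^m\alpha_j\ol{P_n(z_j)}\,z_j^k=0$; then setting $\lambda_j:=\alpha_j|P_n(z_j)|>0$ and using $\ol{P_n(z_j)}=|P_n(z_j)|\sgn\ol{P_n(z_j)}$ yields precisely~\eqref{Rivl-Crit2} with $m\le2n+1$. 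It remains to get the lower bound $m\ge n+1$: if $m\le n$, then the equations in~\eqref{Rivl-Crit2} for $k=0,\dots,m-1$ form a homogeneous linear system whose coefficient matrix is the nonsingular $m\times m$ Vandermonde matrix of the distinct nodes $z_1,\dots,z_m$, forcing $\lambda_j\sgn\ol{P_n(z_j)}=0$ for every $j$, contradicting $\lambda_j>0$.

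For the converse I would assume~\eqref{Rivl-Crit2} for some distinct $w$-extremal points $z_1,\dots,z_m$ and positive $\lambda_1,\dots,\lambda_m$, take complex conjugates to get $\sum_{j=1}^m\lambda_j(\sgn P_n(z_j))\ol{z_j^k}=0$ for $k=0,\dots,n-1$, and then for an arbitrary $q(z)=\sum_{k=0}^{n-1}c_kz^k\in\bb P_{n-1}$ multiply the $k$-th relation by $\ol{c_k}$ and sum over $k$ to obtain $\sum_{j=1}^m\lambda_j(\sgn P_n(z_j))\ol{q(z_j)}=0$. Taking real parts and using $\lambda_j>0$ shows $\Re\bigl[\sgn(P_n(z_{j_0}))\ol{q(z_{j_0})}\,\bigr]\ge0$ for at least one index $j_0$; since $z_{j_0}\in K_0$, this is exactly Kolmogorov's criterion in the $\sgn$ form noted in the remark after Theorem~\ref{Kolm-Thm}. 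Hence $P_n$ is a weighted Chebyshev polynomial by Theorem~\ref{Kolm-Thm}, and $P_n=T_{n,w}^{(K)}$ by uniqueness, Corollary~\ref{Cor-Uniq}.

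The only step that is not pure bookkeeping is the Vandermonde argument producing $m\ge n+1$; everything else is careful tracking of the sign function and complex conjugation, which I expect to be the main place where sign errors could creep in. As an alternative to citing Theorem~\ref{Rivl-Thm} in the forward direction, one can argue directly: via the remark after Theorem~\ref{Kolm-Thm}, Kolmogorov's criterion says that every real-linear functional on $\bb C^n\cong\bb R^{2n}$ is nonnegative at some point of the compact set $\{(z^k\sgn\ol{P_n(z)})_{k=0}^{n-1}:z\in K_0\}$, so $0$ lies in its convex hull, and Carathéodory's theorem in $\bb R^{2n}$ gives a representation $0=\sum_{j=1}^m\lambda_j(z_j^k\sgn\ol{P_n(z_j)})_{k}$ with $\lambda_j>0$ and $m\le2n+1$; the same Vandermonde argument then yields $m\ge n+1$.
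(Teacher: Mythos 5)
Your proposal is correct and follows essentially the paper's route: the corollary is deduced from Theorem~\ref{Rivl-Thm} by specializing to the monomials $q(z)=z^k$, taking complex conjugates, and absorbing the moduli via $\lambda_j=\alpha_j|P_n(z_j)|$. The only deviations are minor and harmless: you get the lower bound $m\ge n+1$ from the nonsingular Vandermonde system rather than the paper's equivalent Lagrange-interpolation argument, and you verify the converse directly through Kolmogorov's criterion (Theorem~\ref{Kolm-Thm}) instead of translating \eqref{Rivl-Crit2} back into \eqref{Rivl-Crit1}.
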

\begin{proof}
This is a straightforward restatement of the previous theorem. First, note that \eqref{Rivl-Crit1} may hold only if $m>n$ since otherwise, by Lagrange interpolation, there exists $q\in\bb P_{n-1}$ such that $q(z_j)=P_n(z_j)$ for all $j=1,\dots,m$ making the left hand-side of \eqref{Rivl-Crit1} strictly positive.

By linearity, it suffices to consider in \eqref{Rivl-Crit1} only the monomials $q(z)=z^k$, $k=0,\dots,n-1$. In addition, since \eqref{Rivl-Crit1} is unaffected by an overall factor, the normalization $\sum_{j=1}^m\alpha_j=1$ can be dropped. Then equivalence of \eqref{Rivl-Crit1} and \eqref{Rivl-Crit2} follows from setting $\lambda_j=\alpha_j|P_n(z_j)|$ and taking complex conjugation.
\end{proof}


We finish this section with a weighted analog of the invariance of Chebyshev norms and Widom factors under polynomial pre-images \cite{KamoBorodin-1994}, \cite[Sect.\,6]{CSZ-2020}.

\begin{theorem}
Let $L\subset\bb C$ be a compact set containing at least $n+1$ points, $w_L$ an upper semi-continuous weight on $L$, and $p(z)=a_m z^m+\dots+a_0$ a polynomial of degree $m$. Define $K=p^{-1}(L)$ and $w_K(z)=w_L(p(z))$. Then
\begin{align}
&T_{nm,w_K}^{(K)}(z)=T_{n,w_L}^{(L)}(p(z))/a_m^n,\label{preim-Tn}
\\
&t_{nm}(K,w_K)=t_n(L,w_L)/|a_m^n|,\label{preim-tn}
\end{align}
and if, in addition, $\CAP(L)>0$,
\begin{align}
\W_{nm}(K,w_K)&=\W_n(L,w_L).\label{preim-Wn}
\end{align}
\end{theorem}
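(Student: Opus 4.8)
The plan is to prove the polynomial pre-image invariance by first establishing \eqref{preim-Tn} via the weighted Kolmogorov criterion (Theorem~\ref{Kolm-Thm}), then deriving \eqref{preim-tn} and \eqref{preim-Wn} as consequences. First I would set $P_{nm}(z)=T_{n,w_L}^{(L)}(p(z))/a_m^n$ and observe that it is a monic polynomial of degree $nm$, so it is a legitimate candidate for the weighted Chebyshev polynomial on $K$ with respect to $w_K$. The key structural fact is the relationship between the $w_K$-extremal points of $P_{nm}$ on $K$ and the $w_L$-extremal points of $T_{n,w_L}^{(L)}$ on $L$: since $w_K(z)|P_{nm}(z)| = w_L(p(z))\,|T_{n,w_L}^{(L)}(p(z))|/|a_m^n|$ and $p$ maps $K=p^{-1}(L)$ onto $L$, we have $\norm{P_{nm}}_{K,w_K} = t_n(L,w_L)/|a_m^n|$ and the set $K_0$ of $w_K$-extremal points of $P_{nm}$ equals $p^{-1}(L_0)$, where $L_0$ is the set of $w_L$-extremal points of $T_{n,w_L}^{(L)}$ on $L$. (This already proves \eqref{preim-tn} once we know $P_{nm}=T_{nm,w_K}^{(K)}$, and \eqref{preim-Wn} follows since $\CAP(K)=\CAP(L)^{1/m}/|a_m|^{1/m}$, a standard identity for polynomial pre-images.)

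Next I would verify the Kolmogorov inequality \eqref{Kolm-Crit} for $P_{nm}$ on $K$. Take any $q\in\bb P_{nm-1}$; I need to show $\max_{z_0\in K_0}\Re[P_{nm}(z_0)\ol{q(z_0)}]\ge0$. The idea is to average $q$ over the fibers of $p$. Fix $\zeta_0\in L_0$ and let $z_0^{(1)},\dots,z_0^{(m)}$ be its preimages under $p$ (with multiplicity; generically $m$ distinct points, all in $K_0$). Since $P_{nm}(z)=T_{n,w_L}^{(L)}(p(z))/a_m^n$ takes the same value at all these preimages, namely $T_{n,w_L}^{(L)}(\zeta_0)/a_m^n$, I would consider the sum $\sum_{k=1}^m P_{nm}(z_0^{(k)})\ol{q(z_0^{(k)})} = (T_{n,w_L}^{(L)}(\zeta_0)/a_m^n)\sum_{k=1}^m\ol{q(z_0^{(k)})}$. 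The quantity $\frac1m\sum_{k=1}^m q(z_0^{(k)})$, as a function of $\zeta_0$, is a symmetric function of the roots of $p(z)-\zeta_0$ and hence is a polynomial in $\zeta_0$ of degree at most $\lfloor (nm-1)/m\rfloor \le n-1$; call it $\tilde q(\zeta_0)\in\bb P_{n-1}$. Applying the Kolmogorov criterion for $T_{n,w_L}^{(L)}$ on $L$ gives a point $\zeta_0\in L_0$ with $\Re[T_{n,w_L}^{(L)}(\zeta_0)\ol{\tilde q(\zeta_0)}]\ge0$, which upon multiplying by $1/|a_m^n|\cdot\frac{|a_m^n|}{a_m^n}\overline{(\cdot)}$-bookkeeping and using $P_{nm}(z_0^{(k)})=T_{n,w_L}^{(L)}(\zeta_0)/a_m^n$ yields $\Re\bigl[\sum_{k=1}^m P_{nm}(z_0^{(k)})\ol{q(z_0^{(k)})}\bigr]\ge0$; hence at least one preimage $z_0^{(k)}\in K_0$ satisfies $\Re[P_{nm}(z_0^{(k)})\ol{q(z_0^{(k)})}]\ge0$, giving \eqref{Kolm-Crit}. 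One must handle the branch points of $p$ (where fibers have fewer than $m$ points) by a limiting argument or by noting $\tilde q$ remains a polynomial in $\zeta_0$ across such points since it is defined by symmetric functions.

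The conclusion then assembles quickly: by Theorem~\ref{Kolm-Thm}, $P_{nm}$ is a weighted Chebyshev polynomial on $K$ with respect to $w_K$, and by uniqueness (Corollary~\ref{Cor-Uniq}) it equals $T_{nm,w_K}^{(K)}$, proving \eqref{preim-Tn}. Taking norms gives \eqref{preim-tn}, and dividing by $\CAP(K)^{nm}=\CAP(L)^n/|a_m|^n$ gives \eqref{preim-Wn}. The main obstacle I anticipate is the bookkeeping around the complex scaling factor $a_m^n$ inside the $\Re[\cdot\,\ol{\cdot}\,]$ expression and, more substantively, making rigorous the claim that the fiber-average $\tilde q(\zeta)=\frac1m\sum_k q(z^{(k)})$ is genuinely a polynomial of degree $\le n-1$ in $\zeta$ across all of $L$, including the critical values of $p$; this is a standard consequence of the theory of symmetric functions and the fact that the elementary symmetric functions of the roots of $p(z)-\zeta$ are polynomials in $\zeta$, but it deserves a careful one-line justification. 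A secondary subtlety is the capacity identity $\CAP(p^{-1}(L))=(\CAP(L)/|a_m|)^{1/m}$, which I would cite rather than reprove.
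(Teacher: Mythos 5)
Your proposal is correct and follows essentially the same route as the paper: the paper also reduces \eqref{preim-Tn} to Kolmogorov's criterion by averaging $q\in\bb P_{nm-1}$ over the fibers $p^{-1}(\zeta)$ and showing the fiber sum is a polynomial of degree at most $n-1$ (the paper does this via Newton's identities, which is the same symmetric-function fact you invoke), then uses uniqueness and the capacity identity $\CAP(K)=(\CAP(L)/|a_m|)^{1/m}$ for \eqref{preim-tn} and \eqref{preim-Wn}. The only cosmetic difference is that you verify the criterion directly for every $q$, while the paper argues by contradiction; your noted $a_m^n$ bookkeeping (absorbing $\ol{a_m^{-n}}$ into the test polynomial) is the right fix and is needed in the paper's version as well.
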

\begin{proof}
Suppose by contradiction that the monic polynomial of degree $nm$,
\begin{align}\label{Rpoly}
R(z)=T_{n,w_L}^{(L)}(p(z))/a_m^n
\end{align}
is not the weighted Chebyshev polynomial on $K$ with respect to the weight $w_K$. Then by Kolmogorov's criterion, Theorem~\ref{Kolm-Thm}, there exists a polynomial $q(z)=c_{nm-1}z^{nm-1}+\dots+c_0$ such that
\begin{align}\label{NegKolm}
\max_{z_0\in K_0}\Re\bigr[R(z_0)\ol{q(z_0)}\,\bigl]<0,
\end{align}
where $K_0$ is the set of $w_K$-extremal points of $R(z)$ on $K$.

For each $\zeta$ let $z_1,\dots,z_{m}$ denote the solutions of $p(z)=\zeta$ and define
\begin{align}\label{AvgS}
S(\zeta)=\sum_{k=1}^m q(z_k) = \sum_{j=0}^{nm-1}c_{j}S_{j}(\zeta),
\end{align}
where $S_j(\zeta):=z_1^j+\dots+z_m^j$. We will show that $S(\zeta)$ is a polynomial of degree at most $n-1$. By Newton's identities for the power sums $S_j(\zeta)$ of the roots of the polynomial $p(z)-\zeta$, we have
\begin{align}
\sum_{j=1}^k a_{m-k+j}S_j(\zeta)-ka_{m-k}&=0, \quad k<m, \label{Newt1}
\\
\sum_{j=k-m+1}^k a_{m-k+j}S_j(\zeta)+(a_{0}-\zeta)S_{k-m}(\zeta)&=0, \quad k\geq m. \label{Newt2}
\end{align}
Since $a_m\neq0$, it follows from \eqref{Newt1} that $S_k(\zeta)$ is independent of $\zeta$ for each $k=1,\dots,m-1$. Then \eqref{Newt2} implies that $S_k(\zeta)$ is a polynomials of degree at most $1$ for each $k=m,\dots,2m-1$. Proceeding by induction, shows that $S_k(\zeta)$ is a polynomial of degree at most $d$ for each $k=dm,\dots,(d+1)m-1$. Thus, by \eqref{AvgS}, $S(\zeta)$ is a polynomial of degree at most $n-1$.

Let $L_0$ be the set of $w_L$-extremal points of $T_{n,w_L}^{(L)}$ on $L$. Then for each $\zeta\in L_0$ the solutions $z_1,\dots,z_m$ of $p(z)=\zeta$ are the $w_K$-extremal points of $R(z)$ on $K$ by \eqref{Rpoly}. Then by \eqref{NegKolm} and \eqref{Rpoly},
\begin{align}
\Re\bigr[T_{n,w_L}^{(L)}(\zeta)\ol{q(z_j)}\,\bigl] = \Re\bigr[R(z_j)\ol{q(z_j)}\,\bigl] <0, \quad j=1,\dots,m.
\end{align}
Summing the inequalities and using \eqref{AvgS} yield $\Re[T_{n,w_L}^{(L)}(\zeta)\ol{S(\zeta)}]<0$, $\zeta\in L_0$. Then since $S\in\bb P_{n-1}$, Kolmogorov's criterion implies that $T_{n,w_L}^{(L)}$ is not the $w_L$-Chebyshev polynomial on $L$ which is a contradiction. Thus, $R(z)$ must be the $w_K$-Chebyshev polynomial on $K$, that is, \eqref{preim-Tn} holds. Equality for the Chebyshev norms \eqref{preim-tn} then follows from \eqref{preim-Tn}. By \cite[Thm.\,5.2.5]{Ransford-1995}, $\CAP(K)=(\CAP(L)/|a_m|)^{1/m}$ hence \eqref{preim-Wn} follows from \eqref{preim-tn} and \eqref{Wfactors}.
\end{proof}

\section{Bounds for Weighted Chebyshev Polynomials}\label{S3}

In this section we derive several inequalities for weighted Chebyshev polynomials. Throughout this section we assume that $K\subset\bb C$ is an infinite compact set and $w:K\to[0,\infty)$ is a bounded weight function nonzero at infinitely many points of $K$. We start with an elementary comparison theorem.


\begin{theorem}
Let $w_1$, $w_2$ be two weights on a compact set $K\subset\bb C$ and suppose that $w_2(z)\neq0$ for all $z\in K$. Then
\begin{align}\label{tKw1-tKw2}
t_n(K,w_1) \leq \sup_{z\in K}\frac{w_1(z)}{w_2(z)}\,t_n(K,w_2), \quad n\in\bb N.
\end{align}
In particular, if $w$ is a weight on $K$ then comparison to the constant weight $w_0=1$ gives the bounds
\begin{align}\label{tKw-tK}
\inf_{z\in K}w(z)\,t_n(K,w_0) \leq t_n(K,w) \leq \sup_{z\in K}w(z)\,t_n(K,w_0), \quad n\in\bb N.
\end{align}
\end{theorem}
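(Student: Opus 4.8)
The plan is to deduce \eqref{tKw1-tKw2} directly from the two defining features of weighted Chebyshev polynomials established in Section~\ref{S2}: that $T_{n,w_1}^{(K)}$ minimizes $\norm{\cdot}_{K,w_1}$ over all monic polynomials of degree $n$, and that the $w_1$- and $w_2$-weighted norms of any fixed polynomial are comparable once $w_1/w_2$ is bounded. Set $c:=\sup_{z\in K}w_1(z)/w_2(z)$. If $c=\infty$ there is nothing to prove, so I would assume $c<\infty$, in which case $w_1(z)\le c\,w_2(z)$ for every $z\in K$ (this is exactly where the hypothesis $w_2>0$ on $K$ enters). First I would test the minimality of $T_{n,w_1}^{(K)}$ against the admissible competitor $T_{n,w_2}^{(K)}$, which is itself monic of degree $n$, to obtain $t_n(K,w_1)=\norm{T_{n,w_1}^{(K)}}_{K,w_1}\le\norm{T_{n,w_2}^{(K)}}_{K,w_1}=\norm{w_1T_{n,w_2}^{(K)}}_{K}$. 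Then the pointwise bound $w_1\le c\,w_2$ together with the definition of the supremum norm gives $\norm{w_1T_{n,w_2}^{(K)}}_{K}\le c\,\norm{w_2T_{n,w_2}^{(K)}}_{K}=c\,t_n(K,w_2)$, and concatenating the two chains of inequalities yields \eqref{tKw1-tKw2}.

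For the two-sided estimate \eqref{tKw-tK} I would simply specialize \eqref{tKw1-tKw2}. The upper bound is the case $w_1=w$, $w_2=w_0$, since $w_0$ is nowhere zero on $K$ and $\sup_{z\in K}w(z)/w_0(z)=\sup_{z\in K}w(z)$. For the lower bound, write $m:=\inf_{z\in K}w(z)$. If $m=0$ then $m\,t_n(K,w_0)=0\le t_n(K,w)$ holds trivially, recalling that $t_n(K,w)>0$ under the standing assumption on $w$. If $m>0$ then $w$ is nowhere zero on $K$, so \eqref{tKw1-tKw2} with $w_1=w_0$ and $w_2=w$ gives $t_n(K,w_0)\le\bigl(\sup_{z\in K}1/w(z)\bigr)t_n(K,w)=t_n(K,w)/m$, which rearranges to the asserted lower bound $m\,t_n(K,w_0)\le t_n(K,w)$.

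The whole argument is elementary, so I do not expect a genuine obstacle; the only points needing a line of care are the two degenerate cases $c=\infty$ and $m=0$, and keeping track of the fact that the non-vanishing hypothesis on the \emph{denominator} weight is precisely what legitimizes the pointwise comparison $w_1\le c\,w_2$ (respectively, the passage from $t_n(K,w_0)\le(\sup 1/w)\,t_n(K,w)$ to the stated form).
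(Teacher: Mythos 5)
Your proposal is correct and follows essentially the same route as the paper: test the $w_1$-minimality against the competitor $T_{n,w_2}^{(K)}$ and then pull out $\sup_{z\in K}w_1(z)/w_2(z)$, with the same specializations $w_1=w,\,w_2=w_0$ and $w_1=w_0,\,w_2=w$ (and the same trivial handling of $\inf w=0$) for \eqref{tKw-tK}. The only cosmetic difference is that you phrase the key step as the pointwise bound $w_1\le c\,w_2$ while the paper factors $w_1=(w_1/w_2)w_2$ inside the norm; these are the same estimate.
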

\begin{proof}
Let $T_{n,w_j}^{(K)}$, $j=1,2$, be minimizers for the two weighted problems on $K$. Then
\begin{align}
t_n(K,w_1)&=\norm{w_1T_{n,w_1}^{(K)}}_{K}\leq\norm{w_1 T_{n,w_2}^{(K)}}_{K}=\norm{(w_1/w_2)w_2T_{n,w_2}^{(K)}}_{K}\nonumber\\
&\leq\sup_{z\in K}\frac{w_1(z)}{w_2(z)}\,\norm{w_2T_{n,w_2}^{(K)}}_{K}=\sup_{z\in K}\frac{w_1(z)}{w_2(z)}\,t_n(K,w_2).
\end{align}
Applying \eqref{tKw1-tKw2} to $w_1=w$, $w_2=1$ then yields the upper bound in \eqref{tKw-tK}. If $w$ has a zero on $K$ the lower bound in \eqref{tKw-tK} is trivial, otherwise applying \eqref{tKw1-tKw2} to $w_1=1$, $w_2=w$ yield the lower bound in \eqref{tKw-tK}.
\end{proof}

The lower bound in \eqref{tKw-tK} can be improved for a class of Szeg\H{o} weights which includes weights with $\inf_{z\in K}w(z)=0$. Before we turn to a discussion of such weights we recall several concepts from potential theory that can be found in \cite{MFinkelshtein-2006, Landkof-1972, Ransford-1995, SaffTotik-1997, Tsuji-1975}.


In the following we assume that $K\subset\bb C$ has positive logarithmic capacity $\CAP(K)>0$ and denote by $\Om$ the \emph{outer domain} of $K$, that is, the unbounded component of $\ol{\bb C}\bs K$. The \emph{polynomial convex hull} of $K$ is the set $\hat K:=\bb C\bs\Om$ and $O\pd K:=\pd\Om=\pd\hat K$ is the \emph{outer boundary} of $K$.
The assumption of positive capacity guarantees that there exists a unique \emph{equilibrium measure} $\rho_K$ of $K$ supported on $O\pd K$ and a \emph{Green function} $g_K(z)$ for the domain $\Om$, given by (see e.g., \cite[Eq.\,(I.4.8)]{SaffTotik-1997}),
\begin{align}\label{gK}
g_K(z)=-\log\CAP(K)+\int\log|z-\zeta|\,d\rho_K(\zeta).
\end{align}
By the above formula, the Green function $g_K(z)$ extends to the entire complex plane with $g_K(z)\ge0$ for all $z\in\ol{\bb C}$ and $g_K(z)=0$ for all $z\in\bb C\bs\ol{\Om}$ and quasi-every $z\in\pd\Om$, see \cite[Cor.\,I.4.5]{SaffTotik-1997}. The points $z\in\pd\Om$ where $g_K(z)$ is continuous are called \emph{regular}. By \cite[Thms.\,I.4.4,~II.3.5]{SaffTotik-1997}, $z\in\pd\Om$ is a regular point if and only if $g_K(z)=0$. We also recall a similar notion of the Green function for $\Om$ with a logarithmic pole at a finite point $a\in\Om$ which we denote by $g_K(z,a)$. By Section~II.4 and, in particular, equation (4.31) in \cite{SaffTotik-1997}, we have
\begin{align}\label{gKa}
g_K(z,a) = \log\frac{1}{|z-a|}+\int\log|z-\zeta|d\om_a(\zeta)+g_K(a),
\end{align}
where $\om_z$ is the \emph{harmonic measure} for $\Om$.
By \cite[Thm.\,II.4.4]{SaffTotik-1997}, $g_K(\cdot,a)$ given by the above formula extends to the entire complex plane with $g_K(z,a)\ge0$ for all $z\in\ol{\bb C}$ and $g_K(z,a)=0$ for all $z\in\bb C\bs\ol{\Om}$ and all regular points $z\in\pd\Om$.


For a bounded Borel measurable weight $w$ on $K$ we define the exponential Szeg\H{o} integral of $w$ by
\begin{align}\label{S-def}
S(w) := \exp\left[\int\log w(\zeta)\,d\rho_K(\zeta)\right].
\end{align}
We say that a bounded weight $w$ is of \emph{Szeg\H{o} class} if $\log w\in L^1(d\rho_K)$, equivalently, $S(w)>0$. For explicit computations, it is convenient to note that $S(\cdot)$ is multiplicative, that is, $S(f^\alpha g^\beta)=S(f)^\alpha S(g)^\beta$ and for the special weight $w(z)=|z-z_0|$ we have, by \eqref{gK},
\begin{align}\label{S-spcase}
S(|z-z_0|)=e^{g_K(z_0)}\CAP(K).
\end{align}
This, in particular, allows us to evaluate $S(|f|)$ for polynomial and rational functions $f$.

Finally, we recall that $\rho_K=\om_\infty$ by \cite[Thm.\,4.3.14]{Ransford-1995} and that all harmonic measures for $\Om$ are comparable, $c_1(z)\rho_K\le\om_z\le c_2(z)\rho_K$, by \cite[Cor.\,4.3.5]{Ransford-1995}. Then for any $f\in L^1(d\rho_K)$ we have a well defined generalized Poisson integral,
\begin{align}\label{PI}
\PI_\Om(f,z):=\int f(\zeta)\,d\om_z(\zeta), \quad z\in\Om.
\end{align}
For later reference we also note that
\begin{align}\label{PI-logS}
\PI_\Om(\log f;\infty)=\log S(f).
\end{align}


Our first result for Szeg\H{o} class weights is the following weighted analog of the Bernstein--Walsh inequality. For a recent review and an extension of the classical unweighted Bernstein--Walsh inequality see \cite{Sch-2017b}.

\begin{theorem}\label{Thm-BW}
Let $K\subset\bb C$ be a compact set with $\CAP(K)>0$ and $w$ be a Szeg\H{o} class weight on $K$. Then for any polynomial $P_n$ of degree $n$,
\begin{align}\label{BWineq}
|P_n(z)|\le\norm{P_n}_{K,w}\exp[-\PI_\Om(\log w,z)+ng_K(z)], \quad z\in\Om.
\end{align}
\end{theorem}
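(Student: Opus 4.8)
The plan is to prove the inequality first for the model weight $w(z)=|z-z_0|$ with $z_0\in K$, where the Szeg\H{o} integral can be computed explicitly via \eqref{S-spcase}, and then bootstrap to general Szeg\H{o} weights by an approximation/product argument. For the single-factor case, consider the rational function $R(z)=P_n(z)/(z-z_0)$; it is a polynomial of degree $n-1$ plus a simple pole at $z_0$. On $K$ we have $|R(z)|\le\norm{wP_n\cdot w^{-1}\cdot(z-z_0)^{-1}\cdot\dots}$—more precisely $|(z-z_0)R(z)|=|P_n(z)|\le w(z)^{-1}\norm{P_n}_{K,w}|z-z_0|$ when $z\ne z_0$, so $|R(z)|\le\norm{P_n}_{K,w}$ on $K\setminus\{z_0\}$ and hence quasi-everywhere on $O\pd K$. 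The function $u(z)=\log|R(z)|-\log\norm{P_n}_{K,w}-(n-1)g_K(z)$ is subharmonic on $\Om\setminus\{z_0\}$ (if $z_0\in\Om$ one subtracts an extra $g_K(z,z_0)$ to kill the pole), bounded above near $\infty$, and $\limsup\le 0$ at quasi-every boundary point; by the maximum principle for subharmonic functions (with the usual polar-set exception absorbed by positive capacity of $K$) we get $u(z)\le 0$ on $\Om$. Rearranging and using $|P_n(z)|=|z-z_0||R(z)|$ together with $\PI_\Om(\log|\cdot-z_0|,z)=\log|z-z_0|+g_K(z,z_0)-g_K(z)$ (from \eqref{gKa} and \eqref{gK}), and $g_K(z,z_0)=0$ when $z_0\in K$, yields \eqref{BWineq} in this special case.

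Next I would handle general Szeg\H{o} weights by approximation. Since $\log w\in L^1(d\rho_K)$, one can approximate $-\log w$ from above in $L^1(d\rho_K)$ by functions of the form $\sum_j c_j\log|\zeta-z_{0,j}|^{-1}+c$ with $c_j>0$ and $z_{0,j}\in K$ (or, more robustly, by continuous functions and then invoke that any usc weight is a decreasing limit of continuous weights together with monotone convergence of the Poisson integral). The cleanest route: by Lemma~\ref{USC-Lem} assume $w$ usc, write $w$ as an infimum of continuous weights $w_k\ge w$, and for continuous positive weights reduce to the model case by a Weierstrass-type approximation of $\log w_k$ on $O\pd K$ by real parts of logarithms of polynomials. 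Each such building block gives \eqref{BWineq}, products of building blocks give it by multiplicativity of $S$ and additivity of $\PI_\Om$, and a limiting argument (dominated convergence for $\PI_\Om(\log w_k,z)$, valid by the comparability $\om_z\le c_2(z)\rho_K$ from \cite[Cor.\,4.3.5]{Ransford-1995}) transfers the bound to $w$. Throughout one uses $\norm{P_n}_{K,w_k}\le\norm{P_n}_{K,w}$ when $w_k\ge w$—wait, that inequality goes the wrong way, so instead one keeps $w$ fixed on the norm side and only approximates inside $\PI_\Om$: the key point is that $\log|P_n(z)|-\log\norm{P_n}_{K,w}\le -\PI_\Om(\log w_k,z)+ng_K(z)$ for suitable $w_k\le w$ that are pointwise limits, then let $k\to\infty$ using monotone convergence of $\PI_\Om(\log w_k,z)\uparrow\PI_\Om(\log w,z)$.

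The main obstacle will be the boundary behavior in the maximum principle step: one must verify that $\limsup_{z\to\xi}u(z)\le 0$ at quasi-every regular boundary point $\xi\in O\pd K$, accounting simultaneously for (a) the exceptional polar set where $g_K$ fails to vanish, (b) possible zeros of $P_n$ on the boundary (harmless, since $u\to-\infty$ there), and (c) the possibility that the model pole $z_0$ lies on $\pd\Om$ rather than in $\Om$ or in $\hat K\setminus\Om$. The assumption $\CAP(K)>0$ is exactly what makes the polar exceptional set negligible for the maximum principle (\cite[Thm.\,II.3.5]{SaffTotik-1997}), and the fact that $P_n R^{-1}\dots$—more simply, that $R$ is holomorphic on $\Om$ with at worst a pole of order $n-1$ at $\infty$—keeps $u$ bounded above near infinity so the extended maximum principle applies. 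Once these boundary technicalities are dispatched, the rest is bookkeeping with \eqref{gK}, \eqref{gKa}, \eqref{S-def}, \eqref{PI}, and \eqref{PI-logS}.
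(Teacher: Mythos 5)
Your first step contains a genuine error that the rest of the argument inherits. For the model weight $w(z)=|z-z_0|$ the hypothesis $w|P_n|\le\norm{P_n}_{K,w}$ on $K$ gives, for $R(z)=P_n(z)/(z-z_0)$, only $|R(z)|\le\norm{P_n}_{K,w}/|z-z_0|^2$ on $K\bs\{z_0\}$; your claimed bound $|R(z)|\le\norm{P_n}_{K,w}$ is false in general, and with it the boundary input for the maximum principle disappears. Concretely, take $K=\pd\,\bb D$, $z_0=1$, $P_1(z)=z+1$: then $\norm{P_1}_{K,w}=\max_{|z|=1}|z^2-1|=2$, while $|R(z)|=|z+1|/|z-1|$ is unbounded on $K\bs\{1\}$, and your $u=\log|R|-\log\norm{P_1}_{K,w}-(n-1)g_K$ tends to $+\infty$ as $z\to1$ from $\Om$, so $u\le0$ on $\Om$ fails; the singleton $\{z_0\}$ cannot be swept into the polar exceptional set because the extended maximum principle requires $u$ bounded above near it. The object to which a maximum-principle (or classical Bernstein--Walsh) argument applies is the polynomial $(z-z_0)P_n(z)$, not the rational function $P_n(z)/(z-z_0)$ --- and even that route only yields \eqref{BWineq} up to a factor $e^{\,g_K(z_0,z)}$, which is harmless only when $z_0$ avoids $\Om$ and irregular boundary points.

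The bootstrap step is also not workable as described. Upper semi-continuity gives approximation of $w$ from \emph{above} by continuous weights, whereas (as you yourself noticed) you need minorants $w_k\le w$ so that $\norm{P_n}_{K,w_k}\le\norm{P_n}_{K,w}$; a general bounded Borel Szeg\H{o} weight, possibly with zeros and discontinuities, is not an increasing pointwise limit of continuous or polynomial-modulus weights, so this direction needs a substantially different (and unspecified) construction. Moreover, the claim that inequalities for individual factors combine ``by multiplicativity of $S$ and additivity of $\PI_\Om$'' is unjustified: \eqref{BWineq} for $w_1$ and for $w_2$ involves $\norm{P_n}_{K,w_1}$ and $\norm{P_n}_{K,w_2}$, and these do not recombine into $\norm{P_n}_{K,w_1w_2}$. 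None of this machinery is needed: the paper's proof simply integrates the pointwise inequality $\log\bigl[w|P_n|\bigr]\le\log\norm{P_n}_{K,w}$ on $K$ against the harmonic measure $\om_z$, uses \eqref{gKa} to get the exact identity $\PI_\Om(\log|P_n|,z)=\log|P_n(z)|+\sum_{j}g_K(z_j,z)-ng_K(z)$ for the zeros $z_j$ of $P_n$, and discards the nonnegative terms $g_K(z_j,z)\ge0$; this works in one stroke for every bounded Szeg\H{o}-class weight, with no approximation and no boundary-regularity bookkeeping. I recommend reworking your argument along those lines.
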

\begin{proof}
WLOG assume that $P_n$ is monic and let $P_n(\zeta)=\prod_{j=1}^n(\zeta-z_j)$. Then, by \eqref{PI}, \eqref{gKa}, and linearity of $\PI_\Om(\cdot,z)$ we have
\begin{align}
\PI_\Om(\log|P_n|,z)
&=\int\log\bigg(\prod_{j=1}^n|\zeta-z_j|\bigg)\,d\om_z(\zeta)
=\sum_{j=1}^n\int\log|\zeta-z_j|\,d\om_z(\zeta)
\nonumber\\
&=\sum_{j=1}^n\bigg[g_K(z_j,z)-\log\frac{1}{|z-z_j|}-g_K(z) \bigg]
\nonumber\\
&=\log|P_n(z)|+\sum_{j=1}^n g_K(z_j,z)-ng_K(z), \quad z\in\Om,
\end{align}
and since $\log[w|P_n|]\le\log\norm{P_n}_{K,w}$ on $K$ and $\om_z$ is a probability measure,
\begin{align}\label{logBWineq}
\log\norm{P_n}_{K,w}
&\ge\PI_\Om(\log[w|P_n|],z)=\PI_\Om(\log w,z)+\PI_\Om(\log|P_n|,z)
\nonumber\\
&=\PI_\Om(\log w,z)+\log|P_n(z)|+\sum_{j=1}^n g_K(z_j,z)-ng_K(z)
\nonumber\\
&\ge\PI_\Om(\log w,z)+\log|P_n(z)|-ng_K(z), \quad z\in\Om.
\end{align}
Exponentiating and rearranging then yields \eqref{BWineq}.
\end{proof}


Next we derive a Szeg\H{o}-type lower bound for the weighted Widom factors and show that it is sharp even if $K\subset\bb R$.

\begin{theorem}\label{Thm-LB}
Let $K\subset\bb C$ be a compact set with $\CAP(K)>0$ and $w$ be a Szeg\H{o} class weight on $K$. Then for all $n\in\bb N$,
\begin{align}\label{LB-WnSw}
\W_n(K,w) \geq S(w).
\end{align}
Equality in \eqref{LB-WnSw} is attained for some $n$ if and only if there exists a monic polynomial $P_n$ of degree $n$ with all zeros on the set $\{z\in\bb C:g_K(z)=0\}$ and such that $w|P_n|=\norm{P_n}_{K,w}$ $\rho_K$-a.e, in which case $P_n=T_{n,w}^{(K)}$.

If $K\subset\bb R$ or more generally if $\hat K$ has empty interior, then the inequality \eqref{LB-WnSw} is strict for all $n\in\bb N$. Nevertheless, if $K\subset\bb R$ then for each $n\in\bb N$ the lower bound \eqref{LB-WnSw} is sharp in the class of polynomial weights on $K$, that is,
\begin{align}\label{Inf-WnSw}
\inf_{w}\frac{\W_n(K,w)}{S(w)} = 1,
\end{align}
where the infimum is taken over polynomials $w(z)$ positive on $K$.
\end{theorem}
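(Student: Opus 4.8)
\textbf{Proof plan for Theorem~\ref{Thm-LB}.}

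The plan is to prove the four assertions in order. For the lower bound \eqref{LB-WnSw}, I would apply the weighted Bernstein--Walsh inequality \eqref{BWineq} with $P_n=T_{n,w}^{(K)}$ and pass to the limit $z\to\infty$. Since $T_{n,w}^{(K)}$ is monic of degree $n$, we have $|T_{n,w}^{(K)}(z)|\sim|z|^n$ as $z\to\infty$; on the other hand $ng_K(z)-n\log|z|\to -n\log\CAP(K)$ by \eqref{gK}, and $\PI_\Om(\log w,z)\to\PI_\Om(\log w,\infty)=\log S(w)$ by \eqref{PI-logS} (using $\rho_K=\om_\infty$). Rearranging \eqref{BWineq} in the form $|T_{n,w}^{(K)}(z)|\,\CAP(K)^{-n}\cdot\CAP(K)^n e^{-ng_K(z)}\le t_n(K,w)\exp[-\PI_\Om(\log w,z)]$ and taking $z\to\infty$ yields $\CAP(K)^n\le t_n(K,w)/S(w)$, i.e.\ \eqref{LB-WnSw}.

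For the equality characterization, I would trace through the two inequalities in \eqref{logBWineq} at $z=\infty$ and determine when each is an equality. The second inequality $\sum_j g_K(z_j,\infty)\ge0$ is an equality precisely when $g_K(z_j)=g_K(z_j,\infty)=0$ for every zero $z_j$ of $T_{n,w}^{(K)}$, i.e.\ all zeros lie on $\{g_K=0\}$. The first inequality, $\log t_n(K,w)\ge\int\log[w|T_{n,w}^{(K)}|]\,d\om_\infty$, is an equality iff $w|T_{n,w}^{(K)}|=t_n(K,w)$ holds $\om_\infty$-a.e., equivalently $\rho_K$-a.e. Combining: equality in \eqref{LB-WnSw} for a given $n$ is equivalent to the existence of a monic degree-$n$ polynomial $P_n$ with zeros on $\{g_K=0\}$ and $w|P_n|=\norm{P_n}_{K,w}$ $\rho_K$-a.e.; and since such a $P_n$ then has $\rho_K$-full $w$-extremal set and attains the lower bound, it must be minimal, so $P_n=T_{n,w}^{(K)}$ by Corollary~\ref{Cor-Uniq}. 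Here one must be careful that $\norm{P_n}_{K,w}$ (a sup over $K$) equals the value $t_n(K,w)$ forced by the chain of equalities; this follows since the equality case forces $|P_n(z)|\CAP(K)^{-n}e^{-ng_K(z)}\le$ const with equality at $\infty$, and then $w|P_n|\le\norm{P_n}_{K,w}$ together with $\rho_K$-a.e.\ equality pins down the norm.

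For strictness when $\hat K$ has empty interior: if equality held, the monic $P_n$ would have all $n$ of its zeros in $\{g_K=0\}\subset O\pd K\subset\hat K$, which has empty interior. But then $\log|P_n|$ is harmonic off this zero set, and the $\rho_K$-a.e.\ condition $w|P_n|=$ const forces $|P_n|$ to be determined $\rho_K$-a.e.; the key point is that a monic polynomial of positive degree cannot have $|P_n|$ constant on a set carrying $\rho_K$ while staying small there — more precisely, I expect to argue via the maximum principle that $g_K(z)-n^{-1}\log(|P_n(z)|/\CAP(K)^n)$ is a nonnegative harmonic function on $\Om$ vanishing at $\infty$, hence identically $0$, forcing $|P_n(z)|=\CAP(K)^n e^{ng_K(z)}$ on all of $\Om$; but the left side is a polynomial modulus and the right side is real-analytic with nonpolynomial growth of its harmonic conjugate unless the geometry is degenerate, and when $\hat K$ has empty interior $\bb C\bs\{g_K=0\}$ is connected, so $P_n$ would be zero-free there, impossible for $\deg P_n\ge1$. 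I anticipate \textbf{this strictness step is the main obstacle}: making rigorous that the forced identity $|P_n|=\CAP(K)^ne^{ng_K}$ on $\Om$ cannot occur when $\hat K$ has empty interior (one clean route: such an identity would make $\overline\partial(g_K)$ match that of a single-valued $\log$ branch, forcing $\rho_K$ to be a sum of $n$ point masses / $K$ to be a union of $n$ analytic arcs with specific potential, contradicting empty interior being compatible only with $g_K$ having no such polynomial representation; alternatively, observe $P_n$ zero-free on the connected open set $\bb C\bs\{g_K=0\}$ forces all zeros into a set of measure zero yet $|P_n|^2$ subharmonic and equal to a constant multiple of $e^{2ng_K}$ on $O\pd K$, and compare boundary behavior).

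For sharpness in the real polynomial class when $K\subset\bb R$: I would exhibit, for each $n$, a sequence of positive polynomial weights $w_k$ with $\W_n(K,w_k)/S(w_k)\to1$. The natural construction is to take $w_k$ so that the $w_k$-Chebyshev polynomial is forced to be (close to) a prescribed monic $P_n$ whose zeros lie near $O\pd K$ and which nearly equioscillates; concretely, fix the monic degree-$n$ polynomial $P_n$ with the smallest sup-norm on $K$ having all zeros on $K$ (or a perturbation with zeros in $\{g_K=0\}\cap\bb R$), and set $w_k(z)=\big(|P_n(z)|+1/k\big)^{-1}$ mollified to a positive polynomial of high degree agreeing with $1/|P_n|$ away from the zeros; then $w_k|P_n|$ is nearly constant on $K$, so $t_n(K,w_k)\le\norm{w_kP_n}_K\approx t_n(K,w_0)/\|P_n\|_K\cdot(\text{something}\to\CAP(K)^n\text{-ish})$, while $S(w_k)=S(1/|P_n|)\cdot(1+o(1))=\CAP(K)^n/\prod e^{g_K(z_j)}\cdot(1+o(1))=\CAP(K)^n(1+o(1))$ by \eqref{S-spcase} since the zeros $z_j\in\{g_K=0\}$; matching the two gives $\W_n(K,w_k)/S(w_k)\to1$. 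The delicate point here is approximating $1/|P_n|$ by positive \emph{polynomial} weights without losing control of either $t_n$ or $S$, which I would handle by a standard density/Weierstrass argument on $K$ together with upper semi-continuity reductions from Lemma~\ref{USC-Lem} and the comparison inequality \eqref{tKw1-tKw2}.
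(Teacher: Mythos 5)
Your treatment of the lower bound, the equality characterization, and the sharpness statement \eqref{Inf-WnSw} follows essentially the paper's route: Bernstein--Walsh applied to $T_{n,w}^{(K)}$ with $z\to\infty$, tracing equality through the two inequalities, and a two-step approximation for sharpness (first a bounded regularization of $1/|P_n|$ with zeros at a regular point of $K$, then uniform approximation by positive polynomials controlled via \eqref{tKw1-tKw2} and the continuity of $S$ under uniform convergence of logarithms); the paper simply takes $P_n(x)=x^n$ and $w_\eps(x)=(x^2+\eps^2)^{-n/2}$. Note the slip that $S(1/|P_n|)=\CAP(K)^{-n}$ when the zeros are regular points, not $\CAP(K)^{n}$, though this does not affect the ratio argument.

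The genuine gap is the strictness claim when $\hat K$ has empty interior, which you yourself flag as unresolved, and the routes you sketch would not close it. Equality in \eqref{LB-WnSw} does not force the identity $|P_n(z)|=\CAP(K)^n e^{ng_K(z)}$ on $\Om$ (the equality conditions constrain only the location of the zeros and the behavior of $w|P_n|$ $\rho_K$-a.e., not $|P_n|$ pointwise off $K$), and the claim that $P_n$ being zero-free on the connected set $\bb C\bs\{z:g_K(z)=0\}$ is ``impossible for $\deg P_n\ge1$'' is false: the equality case precisely requires all zeros to lie in $\{g_K=0\}$, and such monic polynomials exist. The missing idea is much simpler and combines the second equality condition with boundedness of $w$: if $\hat K$ has empty interior then $\Om$ is dense in $\bb C$, so $\bb C\bs\supp(\rho_K)$ is a connected open set on which $g_K$ is harmonic, nonnegative, and positive near $\infty$; hence $g_K>0$ off $\supp(\rho_K)$, i.e.\ $\{z:g_K(z)=0\}\subset\supp(\rho_K)$. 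Thus any zero $z_0$ of the putative extremal $P_n=T_{n,w}^{(K)}$ lies in $\supp(\rho_K)$. Since $w$ is bounded and $|P_n(z)|\to0$ as $z\to z_0$, we get $w|P_n|<\norm{P_n}_{K,w}$ on a sufficiently small neighborhood of $z_0$, and this neighborhood has positive $\rho_K$-measure because $z_0\in\supp(\rho_K)$, contradicting $w|P_n|=\norm{P_n}_{K,w}$ $\rho_K$-a.e. Without an argument of this kind your proposal does not establish strictness.
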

\begin{proof}
To derive \eqref{LB-WnSw} we apply Theorem~\ref{Thm-BW} to the polynomial $P_n=T_{n,w}^{(K)}$. Taking $z\to\infty$ in \eqref{logBWineq}, recalling \eqref{PI-logS}, and noting that $\log|z|-g_K(z)\to\log\CAP(K)$ and therefore $\log|P_n(z)|-ng_K(z)\to n\log\CAP(K)$ we obtain
\begin{align}\label{logLB-WnSw}
\log\norm{P_n}_{K,w}\ge\log S(w|P_n|)
&=\log S(w)+\sum_{j=1}^n g_K(z_j)+n\log\CAP(K)
\nonumber\\
&\ge\log S(w)+n\log\CAP(K),
\end{align}
where $z_1,\dots,z_n$ are the zeros of $P_n$. Exponentiation then gives \eqref{LB-WnSw}.

The equality $\norm{P_n}_{K,w}=S(w)\CAP(K)^n$ holds if and only if equality is attained in both inequalities of \eqref{logLB-WnSw}, that is, $g_K(z_j)=0$ for all zeros $z_j$ of $P_n$ and
\begin{align}\label{logSwP}
\log S(w|P_n|)=\int\log[w|P_n|]\,d\rho_K = \log\norm{P_n}_{K,w}.
\end{align}
Since $w|P_n|\le\norm{P_n}_{K,w}$ on $K$, \eqref{logSwP} is equivalent to the maximality of the integrand $\rho_K$-a.e., that is, $w|P_n|=\norm{P_n}_{K,w}$ $\rho_K$-a.e. Finally, if a monic polynomial $P_n$ satisfies $\norm{P_n}_{K,w}=S(w)\CAP(K)^n$, then by \eqref{LB-WnSw} we have
$\norm{T_{n,w}^{(K)}}_{K,w}\ge S(w)\CAP(K)^n=\norm{P_n}_{K,w}$ which, by uniqueness of the Chebyshev polynomial, Corollary~\ref{Cor-Uniq}, implies $P_n=T_{n,w}^{(K)}$.

If $\hat K$ has empty interior, then $\ol{\Om}=\ol{\bb C}$. Since $\supp(\rho_K)\subset\pd\Om$ it follows that the complement of the $\supp(\rho_K)$ is connected and hence $g_K>0$ outside of the support of $\rho_K$. Then we have $\{z\in\bb C:g_K(z)=0\}\subset\supp(\rho_K)$ and hence, by the previous part, $T_{n,w}^{(K)}$ has a zero $z_0$ in the support of $\rho_K$. Then every neighborhood of $z_0$ has positive $\rho_K$ measure and since $w$ is bounded $w|T_{n,w}^{(K)}|<\norm{T_{n,w}^{(K)}}_{K,w}$ on a sufficiently small neighborhood of $z_0$. Thus, by the previous part, equality in \eqref{LB-WnSw} does not occur in this case.

Next we show \eqref{Inf-WnSw}. Fix $n\in\bb N$ and, by shifting $K$ if necessary, assume that $0\in K$ and that $0$ is a regular point of $K$, that is, $g_K(z)\to0$ as $z\to0$. Consider the weights $w_\eps(z)=(z^2+\eps^2)^{-n/2}$. Then using \eqref{S-spcase} we obtain
\begin{align}
S(w_\eps)&=S\big(|z+i\eps|^{-n/2}|z-i\eps|^{-n/2}\big)
=S\big(|z+i\eps|\big)^{-n/2}S\big(|z-i\eps|\big)^{-n/2}
\nonumber\\
&=e^{-\frac{n}{2}[g_K(i\eps)+g_K(-i\eps)]}\CAP(K)^{-n}
\to\CAP(K)^{-n} \;\text{ as }\; \eps\to0.
\end{align}
Since
\begin{align}
t_n(K,w_\eps) \leq \norm{x^n}_{K,w_\eps}=\sup_{x\in K}|w_\eps x^n| = \sup_{x\in K} \left|\frac{x^2}{x^2+\eps^2}\right|^{n/2} \leq 1,
\end{align}
we have $\W_n(K,w_\eps) \leq \CAP(K)^{-n}$ and hence
\begin{align}\label{Inf-2}
\limsup_{\eps\to0}\W_n(K,w_\eps)/S(w_\eps)\le1.
\end{align}
Next, fix $\eps\in(0,1)$ and approximate $w_\eps$ by polynomials $w_j$ in the uniform norm on $K$. Since $w_\eps\ge c>0$ on $K$ we may assume $w_j>0$ on $K$ for each $j$ and hence we have
\begin{align}
\Big\|1-\frac{w_j}{w_\eps}\Big\|_K \le \frac1c\|w_\eps-w_j\|_K\to0
\;\text{ as }\; j\to\infty.
\end{align}
Then, $\log(w_j/w_\eps)\to0$ uniformly on $K$ hence
\begin{align}\label{Swj-Swe}
\frac{S(w_j)}{S(w_\eps)} = \exp\left[\int\log\frac{w_j(x)}{w_\eps(x)}d\rho_K\right] \to 1 \;\text{ as }\; j\to\infty.
\end{align}
By \eqref{tKw1-tKw2}, we have
\begin{align}
\limsup_{j\to\infty}t_n(K,w_j)\leq
\limsup_{j\to\infty}\|w_j/w_\eps\|_K t_n(K,w_\eps)=t_n(K,w_\eps),
\end{align}
so $\limsup_{j\to\infty}\W_n(K,w_j)\le\W_n(K,w_\eps)$. Combined with \eqref{Swj-Swe} this yields
\begin{align}\label{Inf-3}
\limsup_{j\to\infty}\frac{\W_n(K,w_j)}{S(w_j)}\leq\frac{\W_n(K,w_\eps)}{S(w_\eps)}.
\end{align}
Finally, \eqref{Inf-WnSw} follows from \eqref{LB-WnSw}, \eqref{Inf-2} and \eqref{Inf-3}.
\end{proof}


\begin{remark}
\begin{enumerate}[(i)]
\item A similar lower bound for $L^p$-extremal polynomials, $p<\infty$, has been recently obtained in \cite[Thm.\,2.1]{AlpZin-JMAA2020}. In fact, \eqref{LB-WnSw} can be alternatively derived from that result.

\item For the constant weight $w_0=1$ we have $S(w_0)=1$. In this case it is shown in \cite[Thm.\,1.2]{CSZ-2020} that the lower bound \eqref{LB-WnSw} is saturated, that is, $\W_n(K,w_0)=1$ if and only if the outer boundary of $K$ is a degree $n$ lemniscate (a polynomial pre-image of the unit circle under a degree $n$ polynomial). A similar saturation result for $L^p(d\rho_K)$-extremal polynomials, $p<\infty$, is derived in \cite[Thm.\,4.2]{AlpZin-JAT2020}.

\item In the weighted case, the lower bound \eqref{LB-WnSw} can be saturated on any set $K$ whose polynomial convex hull has nonempty interior. Indeed, if $\hat K$ has nonempty interior, let $P_n$ be a degree $n$ monic polynomial with all zeros in the interior of $\hat K$ and set $w(z)=1/|P_n(z)|$ for all $z\in\pd\hat K$ and $w(z)=0$ everywhere else on $K$. Then since $\supp(\rho_K)\subset\pd\hat K$, it follows from Theorem~\ref{Thm-LB} that equality is attained in \eqref{LB-WnSw} in this case. In addition, Theorem~\ref{Thm-LB} shows that nonempty interior of $\hat K$ is a necessary condition for the equality in \eqref{LB-WnSw}.

\item For the constant weight $w_0=1$, it is known \cite{Sch-2008b} that the lower bound doubles for subsets of the real line, that is, $\W_n(K,w_0)\geq2$ if $K\subset\bb R$. Theorem~\ref{Thm-LB} shows that no doubling of the lower bound occurs in the class of weighted Chebyshev polynomials on subsets of the real line.

\item However, there are nonconstant weights $w$ on $K\subset\bb R$ for which the lower bound \eqref{LB-WnSw} doubles. For example, if $K\subset\bb R$ is a regular compact set and $w(z)=|P_d(z)|$ for some monic polynomial $P_d(z)$ of degree $d$ with all zeros on $K$, then
\begin{align}
t_n(K,w)=\norm{P_dT_{n,w}^{(K)}}_{K}\geq\norm{T_{n+d,w_0}^{(K)}}_{K}=t_{n+d}(K,w_0),
\end{align}
and hence, by the doubled lower bound for the constant weight $w_0=1$ and \eqref{S-spcase}, we get
\begin{align}
\W_n(K,w)\ge\W_{n+d}(K,w_0)\CAP(K)^d\ge2\CAP(K)^d=2S(w).
\end{align}
    We pose it as an open problem to find a characterization of weights $w$ on $K=[-1,1]$ (and more generally on $K\subset\bb R$) for which the doubled lower bound $\W_n(K,w)\ge2S(w)$ holds.
\end{enumerate}
\end{remark}


\bibliographystyle{amsplain}
\bibliography{ChebRefs}

\end{document}